\documentclass[12pt,a4paper,reqno]{amsart}
\usepackage{amsmath}
\usepackage{amssymb}
\usepackage{amsthm}
\usepackage{graphicx}
\usepackage{enumerate}
\usepackage[mathscr]{eucal}
\graphicspath{ {images/} }
\usepackage{xcolor}

\newtheorem{theorem}{Theorem}[section]
\newtheorem{prop}[theorem]{Proposition}

\newtheorem{lemma}[theorem]{Lemma}

\theoremstyle{definition}
\newtheorem{definition}[theorem]{Definition}
\newtheorem{remark}[theorem]{Remark}

\newtheorem{example}[theorem]{Example}

\numberwithin{equation}{section}
\usepackage[pagewise]{lineno}
\newcommand{\dopu}{{:}\allowbreak\ }

\newcommand{\R}{{\mathbb{R}}}

\newcommand{\N}{{\mathbb{N}}}

\newcommand{\eps}{\varepsilon}

\newcommand{\loglike}[1]{\mathop{\rm #1}\nolimits}

\newcommand{\lin}{\loglike{lin}}

\newcommand{\diam}{\loglike{diam}}
\newcommand{\1}{{\bf 1}}

\newcommand{\Far}{\loglike{Far}}

\newcommand{\bea}{\begin{eqnarray*}}
\newcommand{\eea}{\end{eqnarray*}}
\newcommand{\beq}{\begin{equation}}
\newcommand{\eeq}{\end{equation}}

\begin{document}

\title[Chebyshev centers that are not farthest points]{Chebyshev centers that are not farthest points}

\author[Sain]{Debmalya Sain}
\author[Kadets]{Vladimir Kadets}
\author[Paul]{Kallol Paul}
\author[Ray]{Anubhab Ray}

\newcommand{\acr}{\newline\indent}

\address[Sain]{Saranagata\\ Dhandighi\\ Contai\\ Purba Medinipur\\ West Bengal\\ India\\ Pin 721401}
\email{saindebmalya@gmail.com}

\address[Kadets]{Department of Mathematics and Informatics\\ Kharkiv V.N. Karazin National University\\ 61022 Kharkiv, Ukraine}
\email{v.kateds@karazin.ua}

\address[Paul]{Department of Mathematics\\ Jadavpur University\\ Kolkata 700032\\ West Bengal\\ INDIA}
\email{kalloldada@gmail.com}

\address[Ray]{Department of Mathematics\\ Jadavpur University\\ Kolkata 700032\\ West Bengal\\ INDIA}
\email{anubhab.jumath@gmail.com}

\thanks{The first author lovingly acknowledges the constant support and encouragement coming from his beloved younger brother Mr. Debdoot Sain. The research of the second author is done in frames of Ukrainian Ministry of Science and Education Research Program 0115U000481; partially it was done during his stay in  Murcia (Spain) under the support of MINECO/FEDER project MTM2014-57838-C2-1-P and Fundaci\'on S\'eneca, Regi\'on de Murcia  grant 19368/PI/14, and partially during his visit to the University of Granada which was supported by MINECO/FEDER project MTM2015-65020-P.
The fourth author would like to thank DST, Govt. of India, for the financial support in the form of doctoral fellowship}

\subjclass[2010]{Primary 46B20, Secondary 41A65; 52A21; 47H10}
\keywords{Chebyshev center; farthest point; strict convexity; uniform convexity}

\begin{abstract}
In this paper we address the question whether in a given Banach space, a Chebyshev center of a nonempty bounded subset can be a farthest point of the set. Our exploration reveals that the answer depends on the convexity properties of the Banach space. We obtain a characterization of two-dimensional real strictly convex spaces in terms of Chebyshev center not contributing to the set of farthest points. We explore the scenario in uniformly convex Banach spaces and further study the roles played by centerability and M-compactness in the scheme of things to obtain a step by step characterization of strictly convex Banach spaces. We also illustrate with examples the optimality of our results.

\end{abstract}

\maketitle

\section{Introduction.} 
In this paper letter  $X$  denotes a Banach space, 
$ B_{X}=\{ x\in X ~:~\|x\| \leq 1 \} $ and
$ S_{ X }=\{ x\in X  ~:~\|x\|=1 \} $ denote the unit
ball and the unit sphere of $X$ respectively; $B[x,r]=\{ y\in X  ~:~\|x-y\| \leq r \}$ is the closed ball with center $x$ and radius $ r $ and $ S[x,r]= \{ y\in X  ~:~\|x-y\| = r \}$ is the closed sphere with center $ x $ and radius $r$.
For a set $ A, $ $|A|$ denotes the cardinality of $ A, $ if $ A $ is finite then $ |A| $ is the number of elements in $A.$ We call a subset $A$ of $X$ \emph{nontrivial} if $|A| \ge 2$. For a nonempty bounded set
$ A \subseteq  X$, its \emph{diameter}
is 
$$ 
\diam(A) =\sup \limits_{a_{1},a_{2} \in A}\|a_{1}-a_{2}\|. 
$$
The \emph{outer radius} of $ A \subseteq  X$ at an element $x \in X$
is defined as 
\[ r(x,A) = \sup \limits_{a \in A}  \|x-a\| .\]
The supremum in the definition of $r(x,A)$ may be or may be not attained
at some point of $A$. Let 
\[ F(x,A) = \{ a \in A : \|x-a\| = r(x,A)\} \]
denote the collection of all elements in $ A $ which are farthest from $ x \in X$. If for an element $ x \in X, $ $ r(x,A) $ is not attained then $ F(x,A) = \emptyset$.  
The collection of all elements in $A$ at which $r(x,A)$ is attained for some $ x \in  X $ is denoted by $\Far{A} $ i.e.,
\[ \Far{A} = \bigcup_{x \in  X }  F(x,A) .\]

Recall that the most intriguing unsolved problem about farthest points \cite{Klee} is whether there exists a nontrivial bounded convex closed subset $A$ of a Hilbert space $H$ with the property that  $ |F(x,A)| = 1$ for every $x \in H$ (see also \cite{Asplund} and \cite{SPR}). 

The  \emph{Chebyshev radius} $r(A)$ of $A$ is given by
$ r(A)= \inf \limits_{x \in X } r(x,A). $ If there
exists a point $ c \in  X  $ such that $ r(c,A)=r(A)$,
then $c$ is called a \emph{Chebyshev center} of $A$.  Garkavi \cite{gark} proved 
 that if $X$ is 1-complemented in $X^{**}$ (in particular, if $X$ is reflexive) then every
 bounded subset $A$ of $X$ has a  Chebyshev center, and if $X$ is uniformly convex in every direction, then the  Chebyshev center is unique (see also \cite[Ch. 2, Notes and remarks]{Diest}). Consequently, in uniformly convex spaces, every bounded subset $A$ has a unique Chebyshev center \cite[Part 5 \S 33]{H}.

It is possible to characterize inner product spaces among normed
linear spaces, using the notion of Chebyshev center \cite{BP}.
Let $ c_{A} $ denote a Chebyshev center of a nontrivial bounded subset $A$ of a Banach space $  X . $  In \cite{BP}, Baronti and Papini proved the following inequality for any nonempty subset $ A $ of a Hilbert space $H$:
\[ r^2(x,A) \geq r^2(A) + \|x-c_{A}\|^2 ~~~~~~ \mbox{for all} ~~~~~~ x \in  H , \]
in particular,
\[ r(x,A) > \|x-c_{A}\| ~~~~~~ \mbox{for all} ~~~~~~ x \in  H, \]
for any nontrivial bounded subset $ A $ of $ H. $ It clearly follows from the above inequality that in a Hilbert space $  H$, $ c_{A} \notin \Far{A}$, where $ c_{A} $ is the unique Chebyshev
center of a nontrivial bounded subset $A$ of $H$.

A Banach space $X$ is said to be \emph{strictly convex} if $S_X$ does not contain nontrivial linear segment i.e., there does not exist $ u,v \in S_X~(u \neq v) $ such that  $ \{tu + (1-t)v \dopu t \in [0,1]\} \subset S_X $. Equivalently, $X$ is strictly convex if every  $x \in S_X$ is an extreme point of $B_X$. One more reformulation: $X$ is strictly convex if and only if for every two points $x, y \in X \setminus \{0\}$ with $x \notin \{ty \dopu t > 0\},$ the  \emph{strict triangle inequality} $\|x + y\| < \|x\| + \|y\|$  holds true.

It is clear that if the unit sphere of a Banach space $X$ contains a nontrivial line segment $L = \{tu + (1-t)v \dopu t \in [0,1]\}$ (i.e., $X$ is not strictly convex), then all the points of $A$ are of the same distance 1 from the origin, so $ A = \Far{A}$ and in particular, the Chebyshev center $\frac{(u + v)}{2}$ belongs to $\Far{A}$. This observation motivated Debmalya Sain to ask in ``Research Gate'' the following question:

\vspace{3mm}
Can a Chebyshev center of a bounded set be a farthest point of the set from a point in a strictly convex Banach space?
\vspace{3mm}

This question, which we answer in positive,  leaded to other natural questions and answers, and all these resulted in the article which we are presenting now.
We are indebted to the ``Research Gate'' platform that brought the authors of this paper together.

\smallskip

As we will see in this paper, whether the Chebyshev center of a nontrivial subset of a Banach space may belong to the set, is an important factor in determining the convexity properties of the space. In view of the discussions above, let us introduce the following definitions:
\begin{definition}
A set $A$ in a Banach space $X$ is said to be a \emph{CCF  set} (comes from Chebyshev center in $ \Far A $)  if there is a Chebyshev center of $A$ that belongs to $\Far A$. $A$ is said to be a \emph{CCNF set} (comes from Chebyshev center not in $ \Far A $) if it is not a CCF set.
\end{definition}
\begin{definition}
A Banach space $X$ is said to be \emph{CCF} if it contains a nontrivial CCF set. $X$ is said to be \emph{CCNF} if it is not CCF, i.e., all nontrivial subsets of $X$ are CCNF. 
\end{definition}
 The main results of the paper deal with the general properties of CCF and CCNF spaces. These results are collected in the next section, ingeniously called ``Main results''. At first, in Theorem \ref{theo: CCNF2balls}, for every Banach space $X$, we reduce the question whether $X$ is CCNF to the question whether for every $y \in S_X$ and every $ r \in (0,1), $ the Chebyshev radius of the set $B_X \cap B[y, r]$ is strictly smaller than $r$. 
 
From our earlier discussion, it easily follows that every CCNF space must be strictly convex. In Theorem \ref{theorem:Chebyshev}, using Theorem \ref{theo: CCNF2balls} and a geometric lemma,  for two-dimensional spaces we prove the converse result: every two-dimensional strictly convex real Banach space is CCNF.  However, the result no longer holds true if the dimension of the space is greater than two. We give examples, in both finite-dimensional (Example \ref{example:finite-dim}) and infinite-dimensional (Example \ref{example:infinite}) Banach spaces, to illustrate the scenario. 

\smallskip

The infinite-dimensional example  has an interesting additional property that  $ r(A)= \frac{1}{2} \diam(A)$. Recall, a set with this property is called \emph{centerable}. Our Theorem \ref{theorem:uniform} demonstrates impossibility of such examples in uniformly convex spaces: if $A$ is any nontrivial centerable subset of a uniformly convex Banach space $X$,  then $A$ is CCNF. This result implies the following characterization of finite-dimensional strictly convex Banach spaces (Theorem \ref{theorem:finite-strict}):

\noindent A finite-dimensional Banach space $X$ is strictly convex if and only if every nontrivial bounded centerable  subset of $X$ is CCNF. 

\smallskip
 
The notion of M-compactness also plays a vital role in the study of farthest points. A sequence $ \{a_n\} $ in $A$ is said to be  \emph{maximizing} if for some $ x \in  X  $, $ \|x-a_n\| \rightarrow r(x,A)$. A subset $A$ of $X$ is said to be  \emph{M-compact} if every maximizing sequence in $A$ has a subsequence that converge to an element of $A$. In this paper, in Theorem \ref{theorem:strict-M-compact}, we  prove that in a strictly convex Banach space, every nontrivial, bounded, centerable, M-compact set is CCNF. It is also easy to observe that this property characterizes the strict convexity of a Banach space.  

\smallskip

In the last short section, we demonstrate that all $L_p$ spaces, with $p \neq 2,$ differ dramatically from the Hilbert space in the sense of the properties that we consider in this paper. Namely, although, as we mentioned before, Hilbert spaces are CCNF, all non-Hilbert  $L_p$ spaces of dimension greater than two are CCF.


\section{Main results}

Our first goal is to obtain a geometric characterization of CCNF Banach spaces. To this end, we first reduce the CCNF property of a Banach space to subsets of the form ``intersection of the unit ball with a small ball''. The following two lemmas extract the main ideas of the proof.

\begin{lemma}\label{lemma:large-distance}
Let $A$ be a nontrivial bounded subset of $ X$, $x \in \Far{A}$. Then, for every $N > 0$ there is a point $y \in X$ such that $x$ is a farthest point of $A$ from $y$ and  $\| x-y \| > N$.
\end{lemma}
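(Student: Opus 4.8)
The plan is to exploit the defining property of $x \in \Far A$ and simply push the observation point away from $x$ along a suitable ray. Since $x \in \Far A$, by definition there is some $z \in X$ with $x \in F(z,A)$, that is, $\|z-x\| = r(z,A) = \sup_{a \in A}\|z-a\|$. First I would record that $\|z-x\| > 0$: as $A$ is nontrivial, $r(z,A) \ge \frac12 \diam(A) > 0$ (otherwise $A$ would collapse to the single point $z$), so $\|z-x\| = r(z,A) > 0$. I would also note that $x \in A$, since $\Far A \subseteq A$.

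Next, I would move the observation point away from $x$ along the ray determined by $x$ and $z$, extended past $z$. Concretely, for $t \ge 0$ set
\[ y_t = z + t(z-x) = x + (1+t)(z-x). \]
Then $\|y_t - x\| = (1+t)\|z-x\|$, which tends to $+\infty$ as $t \to \infty$; so for any prescribed $N$ one can choose $t$ large enough that $\|y_t - x\| > N$. It remains only to check that $x$ is still a farthest point of $A$ from $y_t$.

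The key estimate is a one-line application of the triangle inequality that uses the collinearity of the displacement. For every $a \in A$,
\[ \|y_t - a\| = \|(z-a) + t(z-x)\| \le \|z-a\| + t\|z-x\| \le r(z,A) + t\|z-x\| = (1+t)\|z-x\| = \|y_t - x\|, \]
where I used $\|z-a\| \le r(z,A) = \|z-x\|$. Since $x \in A$, this shows $r(y_t,A) = \|y_t - x\|$, i.e., $x \in F(y_t,A)$. Combined with the distance computation above, choosing $t > N/\|z-x\| - 1$ completes the proof.

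I do not anticipate a genuine obstacle here; the only thing to get right is the direction in which to push the observation point. Moving along the ray from $x$ through $z$ and beyond is exactly what lets the triangle inequality preserve the farthest-point property, whereas an arbitrary direction need not. Everything else is a routine estimate.
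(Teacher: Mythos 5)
Your proof is correct and follows essentially the same route as the paper: pick $z$ with $x \in F(z,A)$ and push the observation point along the ray from $x$ through $z$ (your $y_t = x + (1+t)(z-x)$ is just a reparametrization of the paper's $tz + (1-t)x$), with the identical triangle-inequality estimate through $z$. Your explicit remark that $\|z-x\| = r(z,A) > 0$ for nontrivial $A$ is a small point the paper leaves implicit, but otherwise the arguments coincide.
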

\begin{proof}
According to the definition of $\Far{A}$, there is a $z \in X$ such that $\| x-z \| \ge \| a-z \| $ for all $a \in A$.
Let us demonstrate that for any $ t > 1, $ $x$ is a farthest point of $A$ from $ tz+(1-t)x$. Indeed, for any $a \in A$,
\begin{eqnarray*}
 \|(tz+(1-t)x)-a\| & \leq & \|tz+(1-t)x-z\| + \|z-a\| \\
                     & \leq & (t-1)\|z-x\| + \|z-x\| \\
			 &	= &   t \|z-x\|  =     \|(tz+(1-t)x)-x\|.						
\end{eqnarray*} 
We observe that $\|tz+(1-t)x\| = \|t(z-x) + x\|   \ge t\|z-x\|  - \|x\| \to \infty$ as $t \to \infty$. Consequently, for sufficiently large $t$, the point $y = tz+(1-t)x$ is what we are looking for. 
\end{proof}
\begin{lemma}\label{lemma:2balls-intersection}
Let $A$ be a nontrivial bounded subset of $ X$, containing its Chebyshev center $c_{A} $. Suppose $c_{A} $ is at the same time a farthest point of $A$ from some $ y \in  X$.
Let $  r $ be the Chebyshev radius of $A$ and $ R = \| c_A-y \|$.
Then $ r \leq R$ and the subset $U =  B[c_{A},r] \cap B[y,R]$ has the following properties: 
\begin{enumerate}
\item[\emph{(a)}] $A \subseteq U$.
\item[\emph{(b)}]  The Chebyshev radius of $U$ equals $r$.
\item[\emph{(c)}] $c_{A} $ is a Chebyshev center of $U$.
\item[\emph{(d)}] $c_{A} $ is a farthest point  of $U$ from $y$.
\end{enumerate}
\end{lemma}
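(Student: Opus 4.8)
The plan is to unwind the two hypotheses into the identities $R = r(y,A)$ and $r = r(c_A,A) = r(A)$, and then read off all four conclusions from the single inclusion $A \subseteq U$ together with the fact that the Chebyshev radius is an \emph{infimum} over the whole space. Since $c_A$ is a farthest point of $A$ from $y$, by definition $R = \|c_A - y\| = r(y,A)$; since $c_A$ is a Chebyshev center, $r = r(c_A,A) = r(A)$. The inequality $r \le R$ is then immediate, because $r = r(A) = \inf_{x \in X} r(x,A) \le r(y,A) = R$.

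For part (a) I would simply check the two defining constraints of $U$ at an arbitrary $a \in A$: one has $\|a - c_A\| \le r(c_A,A) = r$, so $a \in B[c_A,r]$, and $\|a - y\| \le r(y,A) = R$, so $a \in B[y,R]$; hence $a \in U$. The same two estimates applied with $a = c_A$ (together with $\|c_A - c_A\| = 0$) show that $c_A \in U$. Part (d) then follows at once: every $u \in U$ lies in $B[y,R]$, so $\|u - y\| \le R = \|c_A - y\|$, which means the supremum defining $r(y,U)$ equals $R$ and is attained at $c_A$, i.e. $c_A$ is a farthest point of $U$ from $y$.

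The substance of the lemma is in (b) and (c), and I would treat them together. The upper estimate is easy: $U \subseteq B[c_A,r]$ gives $r(c_A,U) \le r$, and $A \subseteq U$ gives the reverse inequality $r(c_A,U) \ge r(c_A,A) = r$, so $r(c_A,U) = r$. For the Chebyshev radius of $U$ itself I would use $A \subseteq U$ in the other direction: for every $x \in X$ we have $r(x,U) \ge r(x,A) \ge \inf_{x' \in X} r(x',A) = r(A) = r$, and taking the infimum over $x$ yields $r(U) \ge r$. Combined with $r(U) \le r(c_A,U) = r$ this gives $r(U) = r$, which is (b); and since $c_A$ attains this common value, $c_A$ is a Chebyshev center of $U$, which is (c).

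The only point that requires genuine attention---and the step I expect to carry the weight of the argument---is the lower bound $r(U) \ge r$: it rests on interpreting $r(A)$ as the infimum of $r(x,A)$ over \emph{all} $x \in X$, so that enlarging $A$ to $U$ may raise the individual outer radii but can never drive the Chebyshev radius below the already-optimal value $r(A)$. Once this is noticed, the proof needs no compactness, completeness, or convexity assumptions, and reduces to a short chain of elementary inequalities.
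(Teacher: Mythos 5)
Your proof is correct and follows essentially the same route as the paper's: deduce $A \subseteq U$ from the two definitional inclusions $A \subseteq B[c_A,r]$ and $A \subseteq B[y,R]$, get the lower bound $r(U) \ge r$ from monotonicity of the Chebyshev radius under inclusion, get the upper bound from $U \subseteq B[c_A,r]$, and read off (c) and (d) from these same inclusions together with $c_A \in U$. Your write-up is merely more explicit about the step $r(x,U) \ge r(x,A) \ge r(A)$, which the paper compresses into the phrase ``Property (a) implies $r(U)\ge r$.''
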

\begin{proof}

Inclusions  $A \subseteq B[c_{A},r]$ and 
\beq \label{lemma:2balls-inters-eq2}
 A \subseteq B[y,R]
\eeq
follow from definitions of Chebyshev center and of farthest point respectively. Consequently, (a) is correct. Because of \eqref{lemma:2balls-inters-eq2},  the Chebyshev radius $r$ of $A$ cannot be greater than $R$. 
Property (a) implies  $r(U) \ge r$, and inclusion
\beq \label{lemma:2balls-inters-eq3}
U \subseteq B[c_{A},r]
\eeq 
implies the reverse inequality, which proves (b). Taking (b) into account, we see that \eqref{lemma:2balls-inters-eq3} means (c). Finally, (d) follows from the fact that $c_{A} \in A \subset U$ and from the inclusion $U \subseteq B[y,R]$.
\end{proof}
Now we are ready to prove the following characterization of CCNF Banach spaces.

\begin{theorem} \label{theo: CCNF2balls}
Denote  $r_{t,z}$ the Chebyshev radius of the set $A_{t,z} = B_X \cap B[z, t]$. Then, for a  Banach space $X$ the following three conditions are equivalent:
\begin{enumerate}
\item[(i)] $X$ is a CCNF space; 
\item[(ii)]   for every $z \in S_X$ and every $t \in (0,1],$ the inequality $r_{t,z} <  t$ holds true;
\item[(iii)]  for every $\eps \in (0,1],$ there is a $t_0 \in (0, \eps)$ such that  for every $z \in S_X$ and every $t \in (0, t_0],$ the inequality $r_{t,z} <  t$ holds true.
\end{enumerate}
\end{theorem}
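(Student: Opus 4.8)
The plan is to establish the cycle of implications (i) $\Rightarrow$ (ii) $\Rightarrow$ (iii) $\Rightarrow$ (i). The implication (ii) $\Rightarrow$ (iii) is immediate: if $r_{t,z} < t$ holds for \emph{every} $t \in (0,1]$, then for any $\eps \in (0,1]$ one may take any $t_0 \in (0,\eps)$, and since $(0,t_0] \subseteq (0,1]$ the required strict inequality holds on all of $(0,t_0]$. The two substantial implications are (i) $\Rightarrow$ (ii) and (iii) $\Rightarrow$ (i), and both rest on a single geometric observation: the set $A_{t,z} = B_X \cap B[z,t]$ is, up to an affine rescaling, exactly the ``extremal'' set $U$ produced by Lemma \ref{lemma:2balls-intersection}.

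For (i) $\Rightarrow$ (ii) I would argue by contraposition. Suppose $r_{t,z} \ge t$ for some $z \in S_X$ and $t \in (0,1]$. Since $A_{t,z} \subseteq B[z,t]$ we have $r(z, A_{t,z}) \le t$, whence $r_{t,z} \le t$ and therefore $r_{t,z} = t$, with $z$ itself a Chebyshev center of $A_{t,z}$. Moreover $z \in A_{t,z}$, and because $A_{t,z} \subseteq B_X$ every $a \in A_{t,z}$ satisfies $\|a\| \le 1 = \|z\|$; hence $z$ is a farthest point of $A_{t,z}$ from the origin, so $z \in \Far{A_{t,z}}$. As $r(A_{t,z}) = t > 0$ forces $A_{t,z}$ to be nontrivial, $A_{t,z}$ is a nontrivial CCF set, contradicting (i).

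For (iii) $\Rightarrow$ (i) I would again argue by contraposition, now exploiting Lemma \ref{lemma:large-distance} to reach the small-$t$ regime. Assume $X$ is CCF, so there is a nontrivial bounded $A$ with a Chebyshev center $c_A \in \Far A$ (in particular $c_A \in A$, since $\Far A \subseteq A$). Then $c_A$ is a farthest point of $A$ from some point, and Lemma \ref{lemma:large-distance} lets us choose $y$ with $R := \|c_A - y\|$ as large as we wish while keeping $c_A$ farthest from $y$. Writing $r := r(A) > 0$ and applying Lemma \ref{lemma:2balls-intersection}, the set $U = B[c_A,r] \cap B[y,R]$ has Chebyshev radius $r$. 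Apply the affine bijection $\phi(w) = (w - y)/R$, which scales all distances by $1/R$: it sends $B[y,R]$ onto $B_X$, sends $c_A$ to $z := (c_A - y)/R \in S_X$, and sends $B[c_A,r]$ onto $B[z, r/R]$. Hence $\phi(U) = A_{t,z}$ with $t := r/R \in (0,1]$, and $r_{t,z} = r(U)/R = r/R = t$. Choosing $R$ large makes $t$ arbitrarily small; so for the $t_0$ supplied by (iii) (say with $\eps = 1$) we can arrange $t \le t_0$, and then $r_{t,z} = t$ contradicts the strict inequality $r_{t,z} < t$ demanded by (iii).

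The main obstacle, and the reason condition (iii) restricting attention only to small radii already suffices, is precisely the use of Lemma \ref{lemma:large-distance} in this last step: inflating the outer ball $B[y,R]$ drives $t = r/R$ to $0$, so control of $r_{t,z}$ for small $t$ propagates to the full CCNF property. The only routine points to check are that $\phi$ genuinely maps the intersection $U$ \emph{onto} $A_{t,z}$ (not merely into it) and that the Chebyshev radius transforms by the factor $1/R$; both follow at once from $\phi$ being an affine bijection whose action multiplies every distance by $1/R$.
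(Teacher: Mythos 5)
Your proposal is correct and follows essentially the same route as the paper: the same cycle (i) $\Rightarrow$ (ii) $\Rightarrow$ (iii) $\Rightarrow$ (i), with (i) $\Rightarrow$ (ii) obtained by showing $r_{t,z}=t$ would make $z$ a Chebyshev center of $A_{t,z}$ that is farthest from the origin, and (iii) $\Rightarrow$ (i) via Lemma \ref{lemma:large-distance}, Lemma \ref{lemma:2balls-intersection}, and the rescaling map $w \mapsto (w-y)/R$ (the paper's $f$). Your explicit checks of nontriviality of $A_{t,z}$ and of surjectivity of the rescaling onto $A_{t,z}$ are minor refinements of details the paper leaves implicit.
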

\begin{proof}
(i) $\Rightarrow$ (ii).  As $ A_{t,z} \subseteq B[z, t] $ we have $r_{t,z} \leq t$.
If $r_{t,z} =t, $ then $z$ is a Chebyshev center of $ A_{t,z}$.  At the same time, $z$ is a farthest point of $A_{t,z}$ from the origin,  which contradicts our assumption (i). Consequently, $r_{t,z} < t$.

The implication  (ii) $\Rightarrow$ (iii) is evident, so it remains to prove 
(iii) $\Rightarrow$ (i).  Assume contrary that $X$ is CCF. Then, by definition, there exists a nontrivial bounded subset $A$ of $X$, containing its Chebyshev center $c_{A}$, such that $ c_{A} \in \Far{A}$. Applying Lemma \ref{lemma:large-distance} for a given $N > 0,$ we can find a $y \in X$ such that $c_A$ is a farthest point of $A$ from $y$ and  $R:=\|c_A-y\| > N$. 
   Denote $r$ the Chebyshev radius of $A$. According to  Lemma~\ref{lemma:2balls-intersection},  $ r \leq R$. Denote $t = \frac{r}{R} \in (0,1]$. Consider the set  $U =  B[c_{A},r] \cap B[y,R]$ from Lemma~\ref{lemma:2balls-intersection}. According to (b) of that lemma, $r(U) = r$.
    
  For every $x \in X,$ denote $f(x) = \frac{1}{R} (x - y)$. Observe that   $f(y) = 0$, $\| f(c_A)\| = 1$ and $f$ multiplies all the distances by the same coefficient $\frac{1}{R}$, i.e., $\|f(x_1) - f(x_2)\| = \frac{1}{R}\|x_1 - x_2\|$ for all $x_1, x_2 \in X$. Consequently,  $r(f(U)) =  \frac{r}{R} = t$. On the other hand,
  $$
  f(U)  = B[f(c_A), \frac{r}{R}] \cap B[f(y), 1] =  B[f(c_A), t] \cap B[0, 1] =A_{t, f(c_A)}.
  $$ 
 So, $r_{t,z} = t$ for $z = f(c_A) \in S_X$ and $t = \frac{r}{R} \le \frac{r}{N} \to 0$  as $N \to \infty$. This contradicts our assumption (iii).
\end{proof}

We next prove that in a two-dimensional strictly convex real Banach space $ X, $ every nontrivial bounded subset of $X$ is CCNF. To this end, we need the following lemma:

\begin{lemma}\label{lemma:2-dimension}
Let $X$ be a two-dimensional real Banach space, $ u,v \in S_ X  $
and let the straight line $ l $ that connects $u$ and $v$ does not
contain origin $ \theta. $ Let $ S $ denote the part of $ B_ X$
not containing $ \theta, $ that is cut from $ B_ X  $
by $l$; $ w=\frac{u+v}{2}, $ $ r=\|u-w\|=\|v-w\|=\frac{1}{2}\|u-v\|. $
Then, $ S \subset w + r B_ X$, i.e., the distance of every
point of $ S $ to $w$ does not exceed $r.$ 
\end{lemma}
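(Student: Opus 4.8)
The plan is to normalize the configuration by an invertible linear map and then reduce everything to a single linear inequality on the cap. First I note that the statement is invariant under any invertible linear $T$: the transformed norm is $\|y\|':=\|T^{-1}y\|$, so $Tu,Tv\in S_X'$, the midpoint goes to $Tw$, the quantity $r=\tfrac12\|u-v\|$ is unchanged, and $T(B[w,r])$ is exactly the ball of radius $r$ about $Tw$ in the new norm. Hence $S\subseteq B[w,r]$ if and only if the analogous inclusion holds after applying $T$. Using this freedom I would choose coordinates so that $l=\{x_2=1\}$ with $\theta$ below it (possible since $\theta\notin l$), $u=(1,1)$, $v=(-1,1)$, whence $w=(0,1)$, $r=\|(1,0)\|$, and $S=B_X\cap\{x_2\ge 1\}$ (here $l$ is a genuine secant, so $B_X\cap l=[u,v]$). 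The goal becomes: every $x=(x_1,x_2)\in S$ satisfies $x-w=(x_1,x_2-1)\in rB_X$.

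The heart of the argument is the linear estimate
\[
(1-r)x_2+r|x_1|\le 1\qquad\text{for all }x\in S.
\]
To prove it I would introduce the auxiliary point $P=(1/r,0)$, which lies on $S_X$ because $\|(1,0)\|=r$; thus $P,-P,u,v\in B_X$. Choosing a supporting functional $\psi=(\psi_1,\psi_2)$ of $B_X$ at $u$, so that $\psi(u)=1\ge\psi(\cdot)$ on $B_X$, and feeding $v$ and $P$ into $\psi(\cdot)\le 1$ yields $0\le\psi_1\le r$ together with $\psi_2=1-\psi_1\in[1-r,1]$. For $x\in S$ the vector $z=x-u$ lies in the wedge $W=\{\psi(z)\le 0\}\cap\{z_2\ge 0\}$, whose two extreme rays point along $(-1,0)$ and $(-\psi_2,\psi_1)$. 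Evaluating $\Lambda_{+}:=(r,1-r)$ on these rays and using $\psi_1\in[0,r]$ gives $\Lambda_{+}\le 0$ on both, hence on all of $W$; therefore $\Lambda_{+}(x)\le\Lambda_{+}(u)=1$, i.e. $rx_1+(1-r)x_2\le 1$. The symmetric computation at $v$, with $-P$ in place of $P$ and $\Lambda_{-}:=(-r,1-r)$, gives $-rx_1+(1-r)x_2\le 1$, and the two inequalities combine to the displayed estimate. I expect this to be the main obstacle: the whole matter reduces to extracting the slope bounds $0\le\psi_1\le r$ from the auxiliary points and verifying the wedge containment $W\subseteq\{\Lambda_{+}\le 0\}$.

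Granting the estimate, the conclusion is routine. Since $x_2\ge 1$ and $r>0$, the inequality forces $|x_1|\le x_2$ (otherwise its left side would exceed $x_2\ge 1$). I would then exhibit $x-w=(x_1,x_2-1)$ as a convex combination of the four points $(1,0)=u-w$, $(-1,0)=v-w$, $rx$, and $\theta$, each of which lies in $rB_X$: take $\gamma=(x_2-1)/(rx_2)$ as the weight of $rx$, solve the first coordinate to get $\alpha-\beta=x_1/x_2$, and set $\alpha+\beta=|x_1|/x_2$ (so $\alpha,\beta\ge 0$). A direct check shows this combination equals $(x_1,x_2-1)$, while $\alpha+\beta+\gamma=|x_1|/x_2+(x_2-1)/(rx_2)\le 1$ is precisely the estimate proved above, leaving a nonnegative weight for $\theta$. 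By convexity of $rB_X$ we get $x-w\in rB_X$, that is $\|x-w\|\le r$. Undoing the linear change of variables then yields $S\subseteq w+rB_X$, as required.
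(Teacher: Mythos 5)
Your proof is correct and takes a genuinely different route from the paper's. The paper's argument is purely metric and coordinate-free: every point $s$ of the cap $S$ is written radially as $s=\lambda w_t$ with $\lambda\ge 1$ and $w_t$ on the chord $[u,v]$; then the reverse triangle inequality $\|\lambda w_t-w_t\|=(\lambda-1)\|w_t\|\le 1-\|w_t\|\le\|u-w_t\|$ combined with the collinearity identity $\|u-w_t\|+\|w_t-w\|=\|u-w\|=r$ gives $\|s-w\|\le r$ in three lines, using nothing beyond the triangle inequality. You instead normalize by a linear map, pin down the supporting functionals at $u$ and $v$ via the auxiliary points $\pm(1/r,0)$, extract the explicit linear bound $r|x_1|+(1-r)x_2\le 1$ on the cap, and certify $x-w\in rB_X$ by an explicit convex combination of $u-w$, $v-w$, $rx$ and $\theta$. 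Your route is heavier (Hahn--Banach plus a cone analysis) but it buys a quantitative linear-functional description of the cap that the paper's proof does not provide. One point where your explicitness is a virtue: both proofs tacitly need that $l$ is a genuine secant, i.e.\ $B_X\cap l=[u,v]$ --- the paper needs it for its unproved claim that every $s\in S$ has the form $\lambda w_t$ or $\lambda w_{t'}$, and you need it where you state it. Indeed, if $l$ is merely a supporting line the conclusion can fail: in $\R^2$ with the max-norm, take $u=(0,1)$, $v=(1,1)$, so $w=(\tfrac12,1)$, $r=\tfrac12$, while the cap is the whole segment $\{(t,1):|t|\le 1\}$ and the point $(-1,1)$ is at distance $\tfrac32$ from $w$. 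So secancy must be read into the hypothesis, as both you and the paper implicitly do.

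There is one small patch needed in your wedge analysis. Your bounds allow $\psi_1=0$, and in that case $W=\{z_2=0\}$ is a full line whose description by the two rays $(-1,0)$ and $(-\psi_2,\psi_1)$ is wrong: $W$ then also contains the ray through $(1,0)$, on which $\Lambda_+$ equals $r>0$, so the containment $W\subseteq\{\Lambda_+\le 0\}$ fails. However, $\psi_1=0$ forces $\psi=(0,1)$, i.e.\ $B_X\subseteq\{x_2\le 1\}$, which makes $l$ a supporting line of $B_X$ and contradicts your standing secancy assumption; one sentence to this effect (and the symmetric one for the functional at $v$) rules the case out and ensures the wedge is a salient cone spanned by exactly the two rays you name. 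With that addition the argument is complete.
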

\begin{proof}
Clearly, it is sufficient to prove that $ \|s-w\| \leq r $ for all $ s \in S. $\\
Let $ w_{t} = (1-t)u+tw, ~~ 0 \leq t \leq 1 $ and $ w_{t'} = (1-t')w+t'v, ~~ 0 \leq t' \leq 1. $
Now,
\begin{eqnarray} \label{eq1}
\nonumber \|u-w_{t}\| + \|w_{t}-w\| & = & \|u-(1-t)u-tw\| + \|(1-t)u+tw-w\| \\
  \nonumber                         & = & t\|u-w\| + (1-t)\|u-w\| \\
													& = & \|u-w\| = r.  
\end{eqnarray}

\begin{figure}[ht]
\centering
\includegraphics{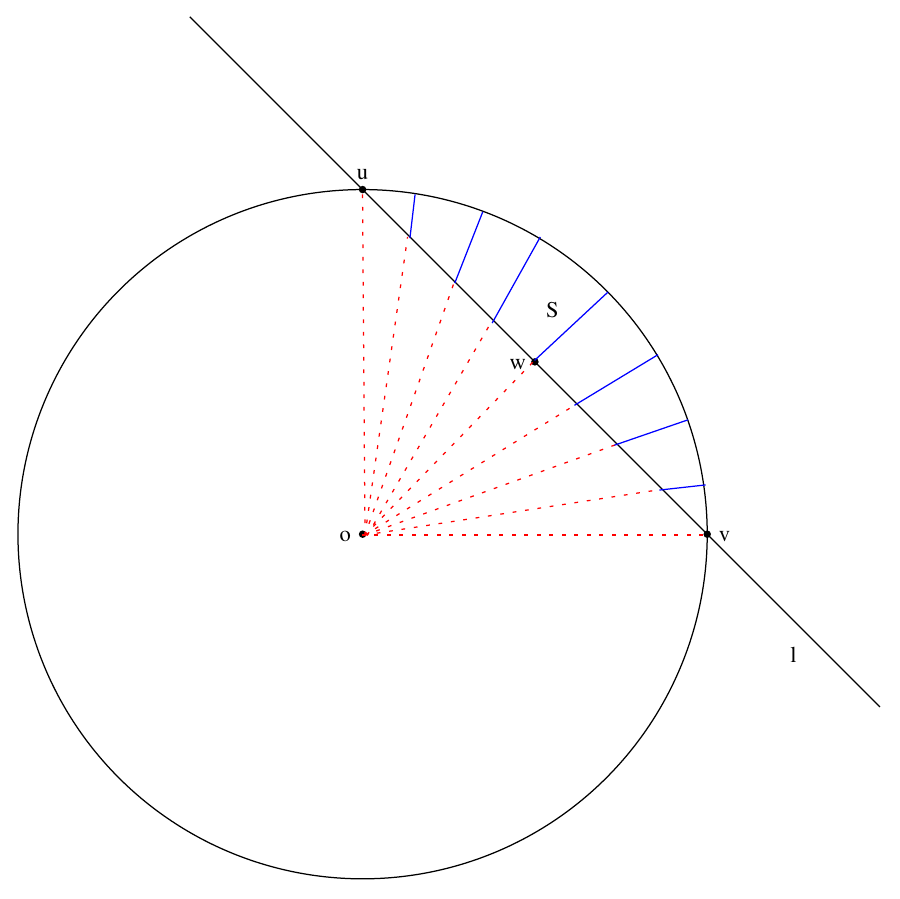} 
\end{figure}

Similarly,
\beq \label{eq2}
\|w-w_{t'}\| + \|w_{t'}-v\| = \|w-v\| = r.  
\eeq
Since $X$ is a two-dimensional real Banach space, for any $ s \in S, $ either $ s=\lambda w_{t} $
or $ s=\lambda w_{t'}, $ for some $ \lambda \geq 1. $ We have,
$ \| \lambda w_{t} \| \leq 1 \Rightarrow \lambda \leq \frac{1}{\|w_{t}\|} $
and also, $ \| \lambda w_{t'} \| \leq 1 \Rightarrow \lambda \leq \frac{1}{\|w_{t'}\|} $. 
Now, 
\begin{eqnarray}\label{eq3}
\nonumber\| \lambda w_{t}-w_{t} \| & = & (\lambda -1)\|w_{t}\| \\
\nonumber                          & \leq &\left (\frac{1}{\|w_{t}\|}-1\right)\|w_{t}\| \\
\nonumber													& = & 1 -\|w_{t}\| \\
													& = & \|u\| -\|w_{t}\| \leq \|u-w_{t}\|. 
\end{eqnarray}
\noindent Similarly, 
\beq \label{eq4}
 \| \lambda w_{t'}-w_{t'}\| \leq \|v-w_{t'}\|.
\eeq
Now using \eqref{eq1} and \eqref{eq3}, we have,
\begin{eqnarray*}
\| \lambda w_{t}-w \| & = & \|\lambda w_{t}-w_{t}+w_{t}-w\| \\
                          & \leq & \|\lambda w_{t}-w_{t}\| + \|w_{t}-w\| \\
													& \leq & \|u-w_{t}\| + \|w_{t}-w\| \\
													& = & \|u-w\| = r. 
\end{eqnarray*}
Similarly, using \eqref{eq2} and \eqref{eq4}, we can show that $ \| \lambda w_{t'}-w\| \leq r. $  So for all $ s \in S, $ $ \|s-w\| \leq r$, which completes the proof.  

\end{proof}

Now we are ready to prove the promised theorem.
\begin{theorem}\label{theorem:Chebyshev}
Let $X$ be a two-dimensional strictly convex real Banach space. Let $A$ be a nontrivial bounded subset of $  X$, containing its Chebyshev center $ c_{A} $.  Then $ A $ is CCNF. 
\end{theorem}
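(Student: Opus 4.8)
The plan is to prove the stronger statement that $X$ itself is CCNF, which by Theorem \ref{theo: CCNF2balls} reduces to verifying condition (ii): for every $z \in S_X$ and every $t \in (0,1]$ one has $r_{t,z} < t$, where $A_{t,z} = B_X \cap B[z,t]$. Since $z \in A_{t,z} \subseteq B[z,t]$, we already know $r_{t,z} \le t$ with $z$ a natural ``candidate center'', so the whole task is to produce a genuinely better center and squeeze out a strict inequality. The geometric Lemma \ref{lemma:2-dimension} is exactly the tool for bounding a spherical cap, and the idea is to realize $A_{t,z}$ as a union of two such caps.

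Concretely, I would first describe the ``lens'' $A_{t,z}$. The spheres $S_X$ and $S[z,t]$ must cross: the center $z$ lies in $S_X \cap B[z,t]$, while $-z \in S_X$ satisfies $\|-z-z\| = 2 > t$, so some points of $S_X$ lie inside $B[z,t]$ and some outside. Using two-dimensionality together with strict convexity, I would show that $S_X \cap S[z,t]$ consists of exactly two points $u, v$, that the arc of $S_X$ joining them through $z$ is precisely $S_X \cap B[z,t]$, and that the chord $l$ through $u$ and $v$ strictly separates the origin from $z$. This is the step I expect to be the main obstacle, since in a general (non-Euclidean) strictly convex norm one has to rule out extra intersection points and control the global shape of the distance function $p \mapsto \|p - z\|$ on $S_X$; the monotone ``first time the distance to $z$ reaches $t$'' description of $u$ and $v$, together with the fact that a point of $S_X$ can never be an interior point of a chord of $B_X$, are the facts I would lean on.

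Granting the lens structure, set $w = \frac{u+v}{2}$ and $r = \tfrac12\|u-v\| = \|u - w\| = \|v - w\|$, and let $H^+, H^-$ be the closed half-planes bounded by $l$, with $z \in H^+$ and the origin in $H^-$. I would then bound the two halves of the lens separately. On the $H^+$ side, $A_{t,z} \cap H^+ \subseteq B_X \cap H^+$, and since $u,v \in S_X$ and $l$ avoids the origin, Lemma \ref{lemma:2-dimension} applied to $B_X$ gives $B_X \cap H^+ \subseteq w + rB_X$. On the $H^-$ side, $A_{t,z} \cap H^- \subseteq B[z,t] \cap H^-$; applying the same lemma to the ball $B[z,t] = z + tB_X$ (whose boundary $S[z,t]$ contains $u,v$, and whose center $z$ lies in $H^+$, so that $H^-$ is the side not containing the center) yields $B[z,t] \cap H^- \subseteq w + rB_X$. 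Taking the union, $A_{t,z} \subseteq w + rB_X$, hence $r_{t,z} \le r(w, A_{t,z}) \le r = \tfrac12\|u-v\|$.

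It remains to check $r < t$, and this is where strict convexity delivers the punch line. Writing $u - v = (u-z) + (z-v)$ with $\|u - z\| = \|z - v\| = t$, the triangle inequality gives $\|u-v\| \le 2t$, with equality only if $u - z$ and $z - v$ are positive multiples of one another, i.e.\ only if $z$ lies on the open segment joining $u$ and $v$ inside $l$. But $z \in S_X$, while every interior point of a chord of a strictly convex ball lies in the interior of $B_X$; hence $z$ is not such an interior point, the strict triangle inequality applies, and $\|u-v\| < 2t$. Therefore $r_{t,z} \le r < t$, which establishes (ii) and proves that $X$ is CCNF. In particular every nontrivial bounded subset of $X$ is CCNF, so the given set containing its Chebyshev center is CCNF as claimed.
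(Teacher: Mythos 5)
Your proposal is correct and is essentially the paper's own argument: the paper likewise covers the two-ball ``lens'' by the two caps controlled via Lemma \ref{lemma:2-dimension} and then uses strict convexity to kill the degenerate equality $\|u-v\|=2r$, and your normalization to the sets $A_{t,z}=B_X\cap B[z,t]$ through Theorem \ref{theo: CCNF2balls} is exactly the rescaling map $f(x)=\frac{1}{R}(x-y)$ that the paper uses inside the proof of that theorem, so the two configurations are affinely identical. The lens-structure facts you flag as the main obstacle (two crossing points $u,v$ and the chord strictly separating $\theta$ from $z$) are assumed in the paper's proof as well, justified only by a figure, and they do hold: pick $u,v$ on the two open arcs of $S_X$ between $z$ and $-z$ by the intermediate value theorem, note $z\notin l$ and $\theta\notin l$ by strict convexity, and observe that the chord $[u,v]$ lies in $B[z,t]$ while every point $\alpha z$ with $\alpha\le 0$ satisfies $\|\alpha z-z\|\ge 1\ge t$, which forces $[u,v]$ to cross the segment $(\theta,z)$ and hence $l$ to separate $\theta$ from $z$.
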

\begin{proof}
We will use the notations of Lemma~\ref{lemma:2balls-intersection}.

\begin{figure}[ht]
\centering 
\includegraphics{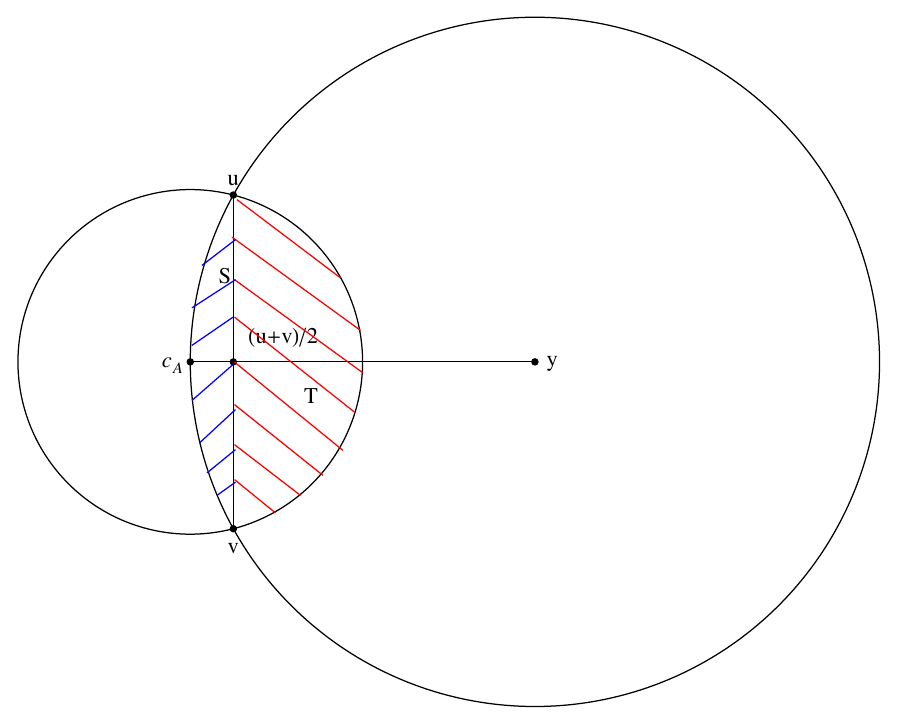}
\end{figure}

Suppose $ c_{A} \in A $ is a farthest point of $ A $ from some $ y \in X. $ Let $ r $
be the Chebyshev radius of $ A $ and $ R = \|x-y\|. $  
Let $ u,v $ be the intersection points of the spheres $ S[c_{A},r] $ and $ S[y,R]$.
Then by Lemma~\ref{lemma:2-dimension}, both $ S $ and $ T $ in the above
picture are subsets of the closed ball centered at $ \frac{(u+v)}{2} $
and radius $ \|\frac{(u-v)}{2}\|. $ Then $ A \subseteq B[\frac{(u+v)}{2},~\|\frac{(u-v)}{2}\|]. $
By the definition of Chebyshev radius$, \|\frac{(u-v)}{2}\| \geq r $
which implies that $ \|u-v\| \geq 2r. $ On the other hand,
$ u,v \in S[c_{A},r]$, so $ \|u-v\| \leq 2r $ and consequently $ \|u-v\|=2r$.
We have $ \|(u-c_{A})+(c_{A}-v)\|=2r $ and $ \|u-c_{A}\|+\|c_{A}-v\|=2r$.
As the space is strictly convex, we must have $ (u-c_{A})=k(c_{A}-v)$, for some constant $ k > 0$.
Since $ \|u-c_{A}\|=\|c_{A}-v\|=r$, we have $ k=1$. Therefore, we have $ c_{A}= \frac{u+v}{2}$. 
Now $ u,v, c_{A} \in S[y,R]$ and so by strict convexity we get 
$ u=v=c_{A} $.  Then $r=0$ and so $A$
consists of only one point, contradicting our assumption that $A$ is nontrivial.
This completes the proof of the theorem.
\end{proof}

The converse of  Theorem~\ref{theorem:Chebyshev} is also true. Indeed, as we already remarked in the introduction, if $X$  is not strictly convex, then $ S_{ X } $ contains a straight line segment $ L=\{(1-t)u+tv : u, v \in S_{ X }, t \in [0, 1]\} $.  It is easy to see that $ \frac{u+v}{2} $ is a Chebyshev center of $ L$, which is also a farthest point of $ L $ from the origin. Thus, we have the following characterization of strict convexity of a two-dimensional real Banach space: 

\begin{theorem}\label{theorem:Chebyshev-char}
A two-dimensional real Banach space $X$ is strictly convex if and only if every nontrivial bounded subset $A$  which contains its Chebyshev center is CCNF.  
\end{theorem}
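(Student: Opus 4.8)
The plan is to prove Theorem~\ref{theorem:Chebyshev-char} by assembling the two implications, both of which are essentially already available in the excerpt. The statement is a biconditional, so I would split it cleanly into the forward and reverse directions and handle each separately.

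For the direction ``strictly convex $\Rightarrow$ every such set is CCNF,'' there is nothing new to do: this is precisely the content of Theorem~\ref{theorem:Chebyshev}. So I would simply invoke that theorem. The only minor point to be careful about is matching the hypotheses: Theorem~\ref{theorem:Chebyshev} applies to a \emph{two-dimensional} strictly convex \emph{real} Banach space and to a nontrivial bounded subset $A$ containing its Chebyshev center, which is exactly the family of sets quantified over in the present statement. Hence this direction is a one-line citation.

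For the reverse direction I would argue by contraposition: assume $X$ is \emph{not} strictly convex and exhibit a single nontrivial bounded set $A$ that contains its Chebyshev center and is CCF (i.e., not CCNF). The natural candidate, already indicated in the remark preceding the statement, is a nontrivial line segment $L = \{(1-t)u + tv : t \in [0,1]\}$ lying in $S_X$, which exists precisely because $X$ fails to be strictly convex. I would then verify three things: (1) $L$ is nontrivial and bounded, and its midpoint $\frac{u+v}{2}$ lies in $L$; (2) $\frac{u+v}{2}$ is a Chebyshev center of $L$, with Chebyshev radius $\frac{1}{2}\|u-v\|$, by checking that the outer radius $r(\frac{u+v}{2}, L) = \frac{1}{2}\|u-v\|$ and that no point achieves a smaller outer radius (the distance between the two endpoints $u,v$ forces any center to be at distance at least $\frac12\|u-v\|$ from one of them); (3) $\frac{u+v}{2}$ is a farthest point of $L$ from the origin, since every point of $L$ has norm $1$ (as $L \subset S_X$), so $r(\theta, L) = 1$ and the midpoint attains it. Together, (1)--(3) show $\frac{u+v}{2}$ is simultaneously a Chebyshev center and a farthest point, so $L$ is CCF.

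I do not anticipate a serious obstacle, since both halves are already worked out earlier; the bookkeeping risk lies in item (2), verifying that the midpoint genuinely minimizes the outer radius rather than merely attaining the diameter bound. This is where I would be most careful: I would note that for any $x \in X$, $r(x,L) \ge \max\{\|x-u\|, \|x-v\|\} \ge \frac{1}{2}(\|x-u\|+\|x-v\|) \ge \frac{1}{2}\|u-v\|$ by the triangle inequality, so the Chebyshev radius is exactly $\frac12\|u-v\|$ and the midpoint attains it, confirming it is a Chebyshev center. With that inequality in hand the argument closes, and the theorem follows.
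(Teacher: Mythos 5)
Your proof is correct and follows essentially the same route as the paper: the forward direction is a direct citation of Theorem~\ref{theorem:Chebyshev}, and the reverse direction uses exactly the paper's construction of a line segment $L \subset S_X$ whose midpoint is both a Chebyshev center (via the triangle-inequality lower bound $r(x,L) \ge \frac{1}{2}\|u-v\|$) and a farthest point of $L$ from the origin. Your careful verification of item (2) simply fills in details the paper dismisses as ``easy to see,'' so there is no substantive difference.
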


In general Theorem~\ref{theorem:Chebyshev} is not true if the dimension of the space is strictly greater than two. 
The following two examples illustrate the situation in both finite and infinite-dimensional strictly convex spaces.
Firstly we recall an easy but useful way to construct equivalent strictly convex norms \cite[Ch. 4 \S 2, Theorem 1]{Diest}.
\begin{prop} \label{prop1-str-conf-norm}
Let $X$, $Y$ be Banach spaces, $Y$ be strictly convex and let $T \dopu X \to Y$ be an injective continuous linear operator. For $x \in X, $ denote $p(x) = \|x\| + \|Tx\|$. Then $(X, p)$ is strictly convex.
\end{prop}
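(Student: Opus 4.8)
The goal is to prove Proposition~\ref{prop1-str-conf-norm}: if $Y$ is strictly convex and $T\dopu X\to Y$ is injective, continuous, and linear, then the norm $p(x)=\|x\|+\|Tx\|$ makes $(X,p)$ strictly convex. My plan is to verify the three basic requirements directly. First I would check that $p$ is genuinely a norm equivalent to the original: positive homogeneity and the triangle inequality are inherited termwise from $\|\cdot\|$ and from the seminorm $x\mapsto\|Tx\|$ (which is a seminorm because $T$ is linear and bounded), while definiteness follows immediately from the first summand, since $p(x)=0$ forces $\|x\|=0$. Equivalence of norms follows from $\|x\|\le p(x)\le(1+\|T\|)\|x\|$, so $(X,p)$ is again a Banach space. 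The injectivity of $T$ is not needed for $p$ to be a norm; it is precisely the ingredient that upgrades strict convexity from $Y$ to $X$.

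The main work is the strict-convexity criterion. I would use the third reformulation given in the introduction: a space is strictly convex if and only if for every pair of nonzero vectors $x,y$ with $x\notin\{ty\dopu t>0\}$ one has the strict triangle inequality in the relevant norm. So suppose $x,y\in X\setminus\{0\}$ are not positive multiples of one another, and examine
\[
p(x+y)=\|x+y\|+\|T(x+y)\|=\|x+y\|+\|Tx+Ty\|.
\]
For the first summand the ordinary (not necessarily strict) triangle inequality gives $\|x+y\|\le\|x\|+\|y\|$. For the second summand I want to apply strict convexity of $Y$, which requires that $Tx$ and $Ty$ be nonzero and not positive multiples of one another.

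The key step, and the one place where injectivity is essential, is establishing that $Tx$ and $Ty$ satisfy the hypothesis of strict convexity in $Y$. Since $T$ is injective and $x,y\neq0$, both $Tx$ and $Ty$ are nonzero. Moreover, if we had $Tx=s\,Ty$ for some $s>0$, then $T(x-sy)=0$, and injectivity would force $x=sy$, contradicting the assumption that $x$ is not a positive multiple of $y$. Hence $Tx\notin\{s\,Ty\dopu s>0\}$, and strict convexity of $Y$ yields the strict inequality $\|Tx+Ty\|<\|Tx\|+\|Ty\|$. Adding this to $\|x+y\|\le\|x\|+\|y\|$ gives
\[
p(x+y)<\bigl(\|x\|+\|Tx\|\bigr)+\bigl(\|y\|+\|Ty\|\bigr)=p(x)+p(y),
\]
which is exactly the strict triangle inequality for $p$, so $(X,p)$ is strictly convex. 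I expect no serious obstacle here: the only subtlety is remembering to translate the non-proportionality of $x$ and $y$ into non-proportionality of their images, and this is a one-line consequence of linearity and injectivity; the additive structure of $p$ then lets the strict inequality in $Y$ dominate the (possibly non-strict) inequality coming from the $\|\cdot\|$ term.
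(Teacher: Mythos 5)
Your proof is correct. Note that the paper itself offers no proof of this proposition at all: it is stated as a recalled fact with a citation to Diestel's book (Ch.~4 \S 2, Theorem 1), so there is no argument of the authors' to compare against. Your argument --- reduce to the strict triangle inequality formulation, use injectivity of $T$ to transfer non-proportionality of $x,y$ to non-proportionality of $Tx,Ty$, apply strict convexity of $Y$ to the second summand, and let that strict inequality dominate the non-strict one in the first summand --- is the standard proof of this renorming lemma, and every step checks out; in effect you have supplied the proof the paper omits.
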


\begin{example} \label{example:finite-dim}
Consider $X= (\mathbb{R}^n, \|. \|), n \geq 3 $ where 
$$ \|(x_{1},x_{2},\ldots, x_n)\|=\sum_{i=1}^{n} |x_{i}| + \frac{1}{2}\sqrt{\sum_{i=1}^{n}|x_{i}|^2}.$$
It is easy to see that the norm is of the form given by  Proposition \ref{prop1-str-conf-norm}. So $X$ is strictly convex and by finite-dimensionality, $ X $ is uniformly convex as well. Consequently, for any bounded set, the Chebyshev center is unique.

Let $\{e_1, \ldots, e_n\}$ be the canonical basis of $\R^n$, i.e., $e_1 = (1,0,0,\ldots, 0)$,  $e_2 = (0,1,0,0,\ldots, 0)$, etc. Also denote $\theta = (0,0,\ldots, 0)$.
Let 
$$ 
A = \{\theta,e_1, e_2, \ldots, e_n\}.
$$   
Consider  $ z = (1,1,\ldots, 1) \in \mathbb{R}^n$. Then 
$ \|z -e_k\|= (n-1) + \frac{\sqrt{(n-1)}}{2} $ for all $k = 1, \ldots, n$.
 However,
 $$
 \|z-\theta\|= n + \frac{\sqrt{n}}{2} > (n-1) + \frac{\sqrt{(n-1)}}{2},
 $$
which proves that $\theta$ is the farthest point of $ A $ from $z$.

\smallskip

We claim that $\theta$ is the Chebyshev center of $A$.
If $ (a_{1},a_{2},\ldots, a_{n})\in \mathbb{R}^n $ is a Chebyshev center of $A$,
then by symmetry, all the cyclic permutations of coordinates 
$$
(a_{2},a_{3},\ldots, a_{n},a_{1}), \ldots, (a_{n},a_{1},\ldots, a_{n-1}) 
$$ 
give us Chebyshev centers of $A$ as well. Since the set of all Chebyshev centers is convex, $ (\alpha,\alpha,\ldots, \alpha) $ is also a Chebyshev center of $ A, $ where $ \alpha = \frac{a_{1}+a_{2}+\ldots +a_{n}}{n}. $ By uniqueness of Chebyshev center in uniformly convex spaces, the Chebyshev center of $A$ should be of the form $sz$, $s \in \R$.  As $ \|a-\theta\|=\frac{3}{2}$ for all $ a \in A\setminus \{\theta\}$, it is sufficient to demonstrate that for any $s \in \R,$ there is a  $ p \in A$ such that $\|sz - p\|\geq \frac{3}{2}$.

\smallskip

If $ s \ge 1$ then considering $p = \theta$ we are done. If $ s<0$ then considering  $e_1$ 
 as $ p $ we are also done.
Let $ 0<s<1$. In this case, let us also take $p = e_1$.
We have, 
\begin{eqnarray*}
 2s > s &\Rightarrow& (1-2s) < (1-s) \\
         & \Rightarrow& 1-(2s+2s-2s) < (1-s) \leq \sqrt{(1-s)^2+s^2+\ldots +s^2} \\
         & \Rightarrow& (s+s-s)+ \frac{1}{2}\sqrt{(1-s)^2+s^2+\ldots +s^2} > \frac{1}{2} \\
	   & \Rightarrow& (1-s)+s+\ldots +s+ \frac{1}{2}\sqrt{(1-s)^2+s^2+\ldots +s^2} > \frac{3}{2} \\
	   & \Rightarrow&\|s z - e_1\|  > \frac{3}{2}.
\end{eqnarray*}
This proves that $\theta$ is the Chebyshev center of $A$.
\end{example}

\begin{remark} \label{remark-example:finite}
Applying Lemma \ref{lemma:2balls-intersection} to the set $A$, from Example \ref{example:finite-dim}, we deduce that there exists a finite-dimensional  uniformly convex Banach space $X$ and a non-trivial convex compact set $U \subset X$ such that $U$ is a CCF subset of $ X$.
\end{remark}

Now we present a similar  example with a centerable subset. The example ``lives'' in an infinite-dimensional  strictly convex Banach space $X$. Afterwords, it will follow from Theorem \ref{theorem:uniform} that such an example is impossible in finite-dimensional strictly convex Banach spaces.

\begin{example}\label{example:infinite}
Consider the space $ c_{0} $ of all sequences of real numbers converging to zero, equipped with
the following norm:
\beq \label{example2:eq1}
 \|x\|= \max_{k} |x_{k}| + \sqrt{\sum_{k=1}^{\infty}\frac{1}{4^k}|x_{k}|^2} 
\eeq
where $ x_{k} ~ (k \in \mathbb{N}) $ denote the $ k- $th coordinate of $x \in c_{0}$. 
Clearly, the norm is strictly convex. Let us denote this Banach space by $  X . $ Let $ \theta = (0,0,\ldots, 0, \ldots) $ and $e_{n} = (0,0, \ldots, 0,1,0 \ldots)$, i.e., the n-th coordinate of $ e_{n} $ is $ 1 $ and all other coordinates are $ 0. $ Denote 
$$
x_n = \frac{1}{n} e_{1} + \left(1-\frac{1}{n}\right)e_{n},  \quad y_n = \frac{1}{n}e_{1} -\left(1-\frac{1}{n}\right)e_{n}
$$ 
and consider $ A=\{\theta\} \cup \{x_n  \dopu n=2,3,...\} \cup \{y_n  \dopu n=2,3,...\}$.

 We claim that $A$ is a subset of the unit ball and consequently, $ r(A) \leq 1$. In fact,
$$
\|x_n\|=\|y_n\|=\left(1-\frac{1}{n}\right)+\sqrt{\frac{1}{4n^2}+\frac{1}{4^n}\left(1-\frac{1}{n}\right)^{2}}.
$$ 
Since $ \frac{1}{4^n} \leq \frac{1}{4n^2} $ for all $ n=2,3,4, \ldots $,
$$
\|x_n\|=\|y_n\| \leq \left(1-\frac{1}{n}\right) +\sqrt{\frac{2}{4n^2}}<1-\frac{1}{n}+\frac{1}{n}=1.
$$
The claim is proved.  Now,
$$
\lim\limits_{n \to \infty}\|x_n - y_n\|=\lim\limits_{n \to \infty} 2\left(1-\frac{1}{n}\right)\|e_{n}\| \geq \lim\limits_{n \to \infty}2\left(1-\frac{1}{n}\right)=2. 
$$
Consequently, $  \diam(A) \geq 2. $ Since $r(A) \geq \frac{1}{2} \diam(A)$, we have $ r(A)=1$. So $ \theta $ is a Chebyshev center of $ A$. 
Finally we prove that $ \theta $ is a farthest point of $A$ from $ u=e_{1} $.  In fact,
$ \|e_{1}-\theta\|=\|e_{1}\|=\frac{3}{2} $.  On the other hand,
\begin{eqnarray*}
\| e_{1}-x_n\| & = &\| e_{1}-y_n\|   =\left(1-\frac{1}{n}\right)\|e_{1} \pm e_{n}\|\\
                                                    & = & \left(1-\frac{1}{n}\right)\left(1+ \sqrt{\frac{1}{4}+\frac{1}{4^n}}\right)\\
                                                    & \leq &\left(1-\frac{1}{n}\right)\left(\frac{3}{2}+\frac{1}{2^n}\right)\\
																										& = & \frac{3}{2}+\frac{1}{2^n}-\frac{3}{2n}-\frac{1}{n2^n}  <  \frac{3}{2}.
\end{eqnarray*} 
So $ \theta $ is the farthest point of $A$ from $ e_{1} $. 

\end{example}

Next, we prove that if $A$ is a bounded centerable subset in a uniformly convex Banach space, then $A$ is CCNF. Before doing this, let us recall one of the standard equivalent definitions of uniform convexity: 
a Banach space $X$ is said to be \emph{uniformly convex} if for every two sequences $\{x_n\}$, $\{y_n\}$ in $B_X,$ the condition $\lim\limits_{n \to \infty}\|x_n + y_n\| = 2$ implies $\lim\limits_{n \to \infty}\|x_n - y_n\| = 0$.
  
\begin{theorem}\label{theorem:uniform}
Let $X$ be a uniformly convex Banach space and $A$ be a nontrivial bounded centerable subset of $X,$ containing its Chebyshev center $c_{A}$. Then $ A $ is CCNF.  
\end{theorem}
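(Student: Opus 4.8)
The plan is to argue by contradiction, combining the centerability of $A$ with two applications of the sequential form of uniform convexity recalled just above the statement. Suppose $A$ is CCF, so its Chebyshev center $c_A$ is a farthest point of $A$ from some $y \in X$; put $R = \|y - c_A\|$ and $r = r(A)$. Since $A$ is nontrivial we have $r = \tfrac12\diam(A) > 0$, and $R > 0$ (otherwise $y = c_A$ forces $A = \{c_A\}$). Because $A$ is centerable with $\diam(A) = 2r$, I would choose sequences $\{a_n\}, \{b_n\}$ in $A$ with $\|a_n - b_n\| \to 2r$.

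The first step is to show that $c_A$ sits asymptotically at the midpoint of $a_n$ and $b_n$. Setting $u_n = (a_n - c_A)/r$ and $v_n = (b_n - c_A)/r$, the Chebyshev property gives $\|u_n\|, \|v_n\| \le 1$, while $\|u_n - v_n\| = \|a_n - b_n\|/r \to 2$. Applying uniform convexity (with $-v_n$ in place of the second vector) yields $\|u_n + v_n\| \to 0$, that is, $\bigl\|\tfrac{a_n + b_n}{2} - c_A\bigr\| \to 0$. By continuity of the norm this gives $\bigl\|y - \tfrac{a_n + b_n}{2}\bigr\| \to \|y - c_A\| = R$.

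The second step turns the farthest-point hypothesis against itself. Since $c_A$ is farthest from $y$, we have $\|y - a_n\| \le R$ and $\|y - b_n\| \le R$, so $x_n := (y - a_n)/R$ and $z_n := (y - b_n)/R$ lie in $B_X$. Their sum satisfies $\|x_n + z_n\| = \tfrac{2}{R}\bigl\|y - \tfrac{a_n+b_n}{2}\bigr\| \to 2$ by the first step, so uniform convexity forces $\|x_n - z_n\| \to 0$. But $\|x_n - z_n\| = \|a_n - b_n\|/R \to 2r/R > 0$, a contradiction. Hence $c_A \notin \Far{A}$ and $A$ is CCNF.

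The main obstacle is arranging the two uniform-convexity estimates so that they pull in opposite directions: the centerability forces $c_A$ toward the midpoint of a near-diametral pair, while the farthest-point condition, via the bound $\|y - a_n\|, \|y - b_n\| \le R$, forces that same pair to collapse. The delicate point is that one must work with the fixed distance $R = \|y - c_A\|$ rather than pushing $y$ to infinity (as in Lemma~\ref{lemma:large-distance}); a diverging $R$ would dilute the convexity gain, whereas the sequential formulation used here keeps $2r/R$ bounded away from $0$ and delivers the contradiction cleanly.
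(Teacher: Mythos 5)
Your proof is correct and is essentially the paper's own argument: both use centerability to produce a near-diametral pair, apply uniform convexity once (normalized by $r$ around $c_A$) to force $\tfrac{a_n+b_n}{2} \to c_A$, and then apply it a second time (normalized by $R$ around $y$) to force $\|a_n - b_n\| \to 0$, contradicting $\|a_n - b_n\| \to 2r$. The only differences are cosmetic sign conventions in the normalized vectors, plus your (harmless, slightly more careful) explicit check that $R > 0$.
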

\begin{proof}
Let $ r > 0$ be the Chebyshev radius of $A$. According to the definition of a centerable set, there are $u_n, v_n \in A$, $n = 1,2, \ldots$ such that 
\beq \label{eq-thm-unconv-eq1}
\lim\limits_{n \to \infty}\|u_n - v_n\| = 2r.
\eeq
 Consider elements 
$$
x_n = \frac{1}{r}(u_n - c_A), \quad y_n = \frac{1}{r}(c_A - v_n). 
$$
Then $x_n, y_n \in B_X$, $\lim_{n \to \infty}\|x_n + y_n\| = 2$, so the uniform convexity of $ X $ implies $\lim_{n \to \infty}\|x_n - y_n\| = 0$. This means that $\lim_{n \to \infty}\|u_n +v_n - 2 c_A\| = 0$. In other words, 
$$
{u_n +v_n} \to 2 c_A.
$$

Suppose $ c_{A} $ is a farthest point of $A$ from  some $ y \in  X$.  Denote $ R = \| c_{A} - y \|$. Now, denote 
$$
\tilde x_n = \frac{1}{R}(u_n - y), \quad \tilde y_n = \frac{1}{R}( v_n - y). 
$$
Then $\tilde x_n, \tilde y_n \in B_X$, 
$$
\lim_{n \to \infty}\|\tilde x_n + \tilde y_n\| = \frac{1}{R} \lim_{n \to \infty}\|u_n + v_n - 2y\| =  \frac{1}{R} \|2 c_A - 2y\|  = 2.
$$

Again, the uniform convexity of $ X $ implies $\lim\limits_{n \to \infty}\|\tilde x_n - \tilde y_n\| = 0$, i.e., $\|u_n - v_n \| \to 0$, which contradicts \eqref{eq-thm-unconv-eq1}.
 This contradiction completes the proof of the theorem. 
\end{proof}

Since in the finite-dimensional case, strict convexity implies uniform convexity, it is possible to obtain the following characterization of finite-dimensional strictly convex Banach spaces, simply by observing that any straight line segment in a Banach space is always a centerable set. 

\begin{theorem}\label{theorem:finite-strict}
A finite-dimensional Banach space $X$ is strictly convex if and only if every nontrivial bounded centerable subset $A$ which contains its Chebyshev center is CCNF.  
\end{theorem}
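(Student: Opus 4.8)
The plan is to prove Theorem~\ref{theorem:finite-strict} by combining Theorem~\ref{theorem:uniform} with the elementary observation (already noted in the introduction) that a nontrivial segment on the unit sphere witnesses the CCF property. The statement is a biconditional, so I would treat the two implications separately.

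For the forward implication, suppose $X$ is a finite-dimensional strictly convex Banach space, and let $A$ be any nontrivial bounded centerable subset containing its Chebyshev center. The key fact is that in finite dimensions strict convexity is equivalent to uniform convexity: the unit sphere $S_X$ is compact, so strict convexity (no line segments on $S_X$) upgrades to uniform convexity by a standard compactness argument. Once uniform convexity is in hand, Theorem~\ref{theorem:uniform} applies directly to $A$ and yields that $A$ is CCNF. So this direction is essentially immediate given the earlier theorem.

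For the reverse implication, I would argue by contraposition: assume $X$ is \emph{not} strictly convex and produce a nontrivial bounded centerable subset containing its Chebyshev center that is CCF. Since $X$ is not strictly convex, $S_X$ contains a nontrivial line segment $L = \{(1-t)u + tv \dopu t \in [0,1]\}$ with $u, v \in S_X$, $u \neq v$. As remarked in the introduction and before Theorem~\ref{theorem:Chebyshev-char}, the midpoint $\frac{u+v}{2}$ is a Chebyshev center of $L$ (its Chebyshev radius is $\frac12\|u-v\|$) and is simultaneously a farthest point of $L$ from the origin, since every point of $L$ lies on $S_X$ at distance $1$ from $\theta$. Thus $L$ is CCF. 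It remains only to note that $L$, being a straight line segment, is automatically centerable: a segment of length $\ell$ has diameter $\ell$ and Chebyshev radius $\frac{\ell}{2}$, so $r(L) = \frac12\diam(L)$. This gives the desired nontrivial bounded centerable CCF set, completing the contrapositive.

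The main obstacle, such as it is, lies in the forward direction and is purely the step that finite-dimensional strict convexity implies uniform convexity; since the paper already invokes this passage from strict to uniform convexity (for instance in Example~\ref{example:finite-dim}, ``by finite-dimensionality, $X$ is uniformly convex as well''), I would state it as a known fact rather than reprove it, and the whole theorem then reduces to assembling Theorem~\ref{theorem:uniform} with the segment construction. The only care needed is to verify that a segment is genuinely centerable, which is the short diameter-versus-radius computation indicated above.
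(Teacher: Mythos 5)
Your proposal is correct and follows essentially the same route as the paper: the forward direction via the standard fact that finite-dimensional strict convexity upgrades to uniform convexity plus Theorem~\ref{theorem:uniform}, and the reverse direction via the segment $L \subset S_X$, which is centerable, contains its Chebyshev center $\frac{u+v}{2}$, and is CCF since every point of $L$ is at distance $1$ from the origin. The paper presents exactly this argument (in compressed form) in the remarks surrounding the theorem, so there is nothing to add.
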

\begin{remark} 
Example \ref{example:infinite} shows that the uniform convexity condition in Theorem~\ref{theorem:uniform} cannot be substituted by strict convexity.
\end{remark}
In the next theorem, we prove that if $A$ is a bounded centerable M-compact subset in a strictly convex Banach space, then $ A $ is CCNF. Before proving the theorem we first prove the following lemma:\\
\begin{lemma}\label{lemma:attainment}
Let $X$ be a Banach space. Let $ A $ be any nontrivial bounded
centerable M-compact subset of $X,$ containing its Chebyshev center $ c_{A}$.
Then $ A $ attains its diameter.
\end{lemma}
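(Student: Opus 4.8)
The plan is to exploit the definition of a centerable set together with $M$-compactness to extract an honest pair of diametral points. By definition of centerability, $r(A) = \tfrac{1}{2}\diam(A)$, and there exist sequences $u_n, v_n \in A$ with $\|u_n - v_n\| \to \diam(A) = 2r$, where $r = r(A)$. The difficulty is that these supremizing sequences need not converge a priori; $M$-compactness is precisely the tool that repairs this, but it is stated in terms of \emph{maximizing} sequences for the functional $a \mapsto \|x - a\|$ from a fixed point $x$, whereas here the quantity being maximized is the mutual distance $\|u_n - v_n\|$. The main obstacle, therefore, is to convert a diameter-maximizing pair of sequences into a maximizing sequence in the sense of $M$-compactness, so that the definition can be applied.

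\textbf{Key steps.} First I would fix one of the two sequences and pass to a point that witnesses $M$-compactness for the other. Concretely, I expect to argue as follows. Since $\|u_n - v_n\| \to 2r$ and both sequences lie in the bounded set $A$, I first want to show that $\{v_n\}$ (say) is a maximizing sequence for some fixed point of $X$. The natural candidate is to use the Chebyshev center $c_A$ itself, but the cleaner route is: observe that for each $n$, $\|u_n - v_n\| \le r(v_n, A) \le \diam(A) = 2r$, so $r(v_n, A) \to 2r$. If one of the sequences, after passing to a subsequence, converges to a point $v^* \in A$ (which I will obtain from $M$-compactness), then by continuity $r(v^*, A) = 2r = \diam(A)$, meaning $v^*$ realizes the diameter with \emph{some} point of $A$. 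Thus the second step is to show that the outer radius $r(v^*, A) = \diam(A)$ is actually attained, again invoking $M$-compactness for the point $v^*$: any sequence $a_k \in A$ with $\|v^* - a_k\| \to r(v^*, A)$ is maximizing, so it has a subsequence converging to some $a^* \in A$ with $\|v^* - a^*\| = r(v^*, A) = \diam(A)$.

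\textbf{Assembling the argument.} Putting these together, the plan is: (1) use centerability to get $u_n, v_n \in A$ with $\|u_n - v_n\| \to 2r = \diam(A)$; (2) verify that $\{v_n\}$ is maximizing for a suitable fixed point — I would take the fixed point to be a limit point or use the relation $r(v_n, A) \ge \|u_n - v_n\| \to 2r$ combined with $r(v_n,A) \le \diam(A) = 2r$ to conclude $r(v_n, A) \to 2r$, and then treat $\{v_n\}$ as maximizing for whichever point makes the $M$-compactness hypothesis directly applicable; (3) extract via $M$-compactness a convergent subsequence $v_{n_k} \to v^* \in A$ with $r(v^*, A) = \diam(A)$; (4) apply $M$-compactness a second time to the point $v^*$ to find $a^* \in A$ attaining $\|v^* - a^*\| = \diam(A)$. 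Then the pair $(v^*, a^*)$ realizes $\diam(A)$, so $A$ attains its diameter. I anticipate the delicate point will be a careful matching of the ``maximizing sequence'' definition (which fixes $x \in X$ and lets $a_n$ vary) with the two-variable supremum defining the diameter; the remedy is the standard trick of freezing one variable at a limit point and reducing to a one-variable maximization, where strict convexity is not even needed — only boundedness, centerability, and $M$-compactness enter, which is consistent with the lemma's hypotheses.
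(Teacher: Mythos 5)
Your overall plan is right in spirit, but steps (2)--(3) contain a genuine gap: you never exhibit a \emph{fixed} point $x \in X$ for which $\{v_n\}$ is a maximizing sequence, and without such a point M-compactness cannot be invoked. The relation $r(v_n,A) \to 2r$ that you derive is a statement about outer radii at the \emph{moving} points $v_n$; it is not the definition of a maximizing sequence, which requires a single $x$ with $\|x - v_n\| \to r(x,A)$, and M-compactness says nothing about sequences of the former kind. Your two suggested remedies do not close this: ``take the fixed point to be a limit point'' is circular, since the limit point is exactly what the (not yet applicable) M-compactness is supposed to produce, and ``whichever point makes the M-compactness hypothesis directly applicable'' names no point at all. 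So the extraction of $v^*$ in step (3) is unjustified as written.

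The fix is precisely the ``natural candidate'' you mention and then abandon: the Chebyshev center $c_A$, and this is what the paper does. Since $r(c_A,A) = r(A) = r$, we have
\[
2r \;\longleftarrow\; \|u_n - v_n\| \;\le\; \|u_n - c_A\| + \|c_A - v_n\| \;\le\; r + r \;=\; 2r,
\]
and since each summand is at most $r$, both must converge to $r$. Hence \emph{both} $\{u_n\}$ and $\{v_n\}$ are maximizing sequences for the fixed point $c_A$ (the paper phrases this as a contradiction argument on a subsequence, but it is the same inequality). Now M-compactness applies: extract $u_{n_k} \to \tilde x \in A$, then a further subsequence along which $v_{n_k} \to \tilde y \in A$, and conclude $\|\tilde x - \tilde y\| = \lim_n \|u_n - v_n\| = 2r = \diam(A)$. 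Once this is in place, your step (4) --- a second application of M-compactness at the limit point to attain $r(v^*,A)$ --- is no longer needed, although it would be a correct way to finish if one only knew that some single point $v^*$ with $r(v^*,A) = \diam(A)$ existed.
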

\begin{proof}
Since in our case, $$ 
\diam(A) = \sup \limits_{a,b \in A}  \|a-b\|  = 2r(A), 
$$ 
there exist sequences $ \{ x_{n}\},~\{ y_{n}\} \subset A $ such that $ \|x_{n}-y_{n}\| \rightarrow  2r(A)$.
We claim that $ \{x_{n}\} $ is a maximizing sequence in $ A $ for $ c_{A}$. If not, then there exists $ \eps_{0} > 0 $ and a subsequence $ \{ x_{n_{k}} \} $ such that
 $ \|c_{A}- x_{n_{k}}\| \leq r(A) - \eps_{0}. $
Then,
\begin{eqnarray*}
\| x_{n_{k}}-y_{n_{k}}\| &=& \|( x_{n_{k}}-c_{A}) + (c_{A}-y_{n_{k}})\| \\
                & \leq & \|(x_{n_{k}}-c_{A})\| + \|(c_{A}-y_{n_{k}})\| \\
								& \leq & r(A) - \eps_{0} + r(A) =  2r(A) - \eps_{0},
\end{eqnarray*}
which contradicts the fact that $ \|x_{n}-y_{n}\| \rightarrow 2r(A)$. 
By the same argument, $ \{ y_{n} \} $ is a maximizing sequence in $ A $ for $ c_{A}$. 
Consequently, as $ A $ is M-compact,  there is a subsequence $\{n_{k}\} \subset \N$ and there are $ \tilde x,  \tilde y \in A$ such that    $x_{n_{k}} \to \tilde x$ and  $y_{n_{k}} \to \tilde y$. Then 
$$
\diam(A) = \lim \limits_{n \to \infty} \|x_{n}-y_{n}\| = \lim \limits_{k \to \infty} \|x_{n_{k}}-y_{n_{k}}\| = \| \tilde x -  \tilde y\|. 
 $$ 
Thus diameter of $ A $ is attained. 								
\end{proof} 

We now prove the desired theorem.

\begin{theorem}\label{theorem:strict-M-compact}
Let $X$ be a strictly convex Banach space and $ A \subset X$ be a
nontrivial bounded centerable  M-compact subset,
containing its Chebyshev center $c_{A}$. Then $ A $ is CCNF. 
\end{theorem}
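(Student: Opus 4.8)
The plan is to argue by contradiction, using the diameter attainment guaranteed by Lemma~\ref{lemma:attainment} as a replacement for the limiting/compactness argument that drove the uniformly convex case in Theorem~\ref{theorem:uniform}. Write $r = r(A)$ for the Chebyshev radius. Since $A$ is centerable, $\diam(A) = 2r$, and since $A$ is nontrivial, $r > 0$. By Lemma~\ref{lemma:attainment} there exist $\tilde x, \tilde y \in A$ with $\|\tilde x - \tilde y\| = 2r$. Suppose, toward a contradiction, that $c_A$ is a farthest point of $A$ from some $y \in X$, and set $R = \|c_A - y\|$. The whole argument then rests on applying strict convexity twice, in exactly the manner already used in the two-dimensional Theorem~\ref{theorem:Chebyshev}.

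\textbf{First step: locate $c_A$ as the midpoint of $\tilde x$ and $\tilde y$.} Because $c_A$ is a Chebyshev center, $\|\tilde x - c_A\| \le r$ and $\|c_A - \tilde y\| \le r$, so the chain
$$
2r = \|\tilde x - \tilde y\| \le \|\tilde x - c_A\| + \|c_A - \tilde y\| \le 2r
$$
forces equality everywhere: $\|\tilde x - c_A\| = \|c_A - \tilde y\| = r$, and the triangle inequality is saturated. Since $r > 0$, both vectors $\tilde x - c_A$ and $c_A - \tilde y$ are nonzero, so by the strict-convexity reformulation of the triangle inequality they are positively proportional; equality of norms then forces $\tilde x - c_A = c_A - \tilde y$, that is, $c_A = \frac{\tilde x + \tilde y}{2}$.

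\textbf{Second step: exploit that $c_A$ is a farthest point from $y$.} Then $\|\tilde x - y\| \le R$ and $\|\tilde y - y\| \le R$, while
$$
R = \|c_A - y\| = \tfrac{1}{2}\|(\tilde x - y) + (\tilde y - y)\| \le \tfrac{1}{2}\bigl(\|\tilde x - y\| + \|\tilde y - y\|\bigr) \le R
$$
again forces equality throughout: $\|\tilde x - y\| = \|\tilde y - y\| = R$ together with a saturated triangle inequality. Here $R > 0$, since $R = 0$ would give $r(c_A, A) = 0$ and hence $A = \{c_A\}$, contradicting nontriviality. Applying strict convexity a second time yields $\tilde x - y = \tilde y - y$, whence $\tilde x = \tilde y$ and $2r = \|\tilde x - \tilde y\| = 0$, contradicting $r > 0$.

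I expect the genuine content to lie entirely in Lemma~\ref{lemma:attainment}: once the diameter is known to be attained, the theorem reduces to two applications of the elementary fact that in a strictly convex space a saturated triangle inequality between nonzero vectors forces positive proportionality. There is no real analytic obstacle here; the only points requiring care are the degenerate cases $r = 0$ and $R = 0$, which must be ruled out before invoking strict convexity, and both follow immediately from the nontriviality of $A$.
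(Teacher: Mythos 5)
Your proof is correct and follows essentially the same route as the paper's: obtain diameter-attaining points via Lemma~\ref{lemma:attainment}, force $c_A$ to be their midpoint by one application of strict convexity, then apply strict convexity a second time to the vectors from the farthest-point witness to collapse the two points and reach a contradiction. The only differences are cosmetic (you invoke the saturated-triangle-inequality reformulation of strict convexity where the paper phrases it via three points on a sphere), so there is nothing to add.
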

\begin{proof}
Suppose $ A $ is CCF. Then $ c_{A} \in \Far A$. By the definition, there exists $x \in X$
such that $ c_{A} \in F(x,A) $. Denote 
$$
R =  \|x-c_{A}\|= \sup \limits_{a \in A} \|x-a\|.
$$
Due to Lemma~\ref{lemma:attainment},
$ \diam(A)$ is attained and since $ A $ is centerable, $\diam(A)=2r(A)$. This means that there exist $ a_{1},a_{2} \in A $ such that
\begin{equation} \label{eq-diam2rattained}
  \|a_{1}-a_{2}\|= \sup \limits_{a,b \in A}\|a-b\| =2r(A).
\end{equation}
We claim that $ \|c_{A}-a_{1}\|=\|c_{A}-a_{2}\|=r(A)$. Clearly $ \|c_{A}-a_{1}\| \leq r(A)$ and $\|c_{A}-a_{2}\|\leq r(A)$.
Moreover, the assumption that one of them is strictly smaller than $r(A)$ leads to a contradiction: 
$$
2r(A) =  \|a_{1}-a_{2}\|=\|a_{1}-c_{A}+c_{A}-a_{2}\| \leq \|a_{1}-c_{A}\|+\|a_{2}-c_{A}\|< 2r(A).
$$
So, the claim is proved.  Now, 
$$
\left\|\frac12\left((a_{1}-c_{A})+(c_{A}-a_{2})\right)\right\| = r(A).
 $$ 
 Geometrically this means that $a_{1}-c_{A}, c_{A}-a_{2}$ and $\frac12\left((a_{1}-c_{A})+(c_{A}-a_{2})\right)$ belong to the same sphere $r(A)S_X$. 
By the strict convexity of $ X, $ it follows that $a_{1}-c_{A} =  c_{A}-a_{2}$, i.e., $c_A = \frac12 (a_{1}+a_{2})$.
The following chain of inequalities
\begin{eqnarray*}
R &=&\|x-c_{A}\| = \bigl\|\frac12 \bigl((x-a_{1})+(x-a_{2}) \bigr)\bigr\|  \\
            &\le& \frac12  \bigl\|x-a_{1}\bigr\| +   \frac12 \bigl\|x-a_{2}\bigr\| \le \sup \limits_{a \in A} \|x-a\|  = R
\end{eqnarray*}
implies that all of them are equalities, i.e., all three vectors $x-a_{1}$, $x-a_{2}$, and $\frac12 \bigl((x-a_{1})+(x-a_{2}) \bigr)$  belong to the same sphere $R S_X$. Then, the strict convexity of $ X $ implies that $x-a_{1} = x-a_{2}$, i.e., $a_1 = a_2$. This contradiction with \eqref{eq-diam2rattained} completes the proof of the theorem. 
\end{proof}
\begin{remark}
Example \ref{example:infinite} shows that the M-compactness condition in Theorem \ref{theorem:strict-M-compact} can not be removed.
\end{remark}

Now, we can give an characterization of strictly convex Banach spaces, simply by observing that any closed straight line segment in a Banach space is always a centerable and M-compact set. Thus, we have the following theorem : 
\begin{theorem}\label{theorem:strictly-convex}
A Banach space $X$ is strictly convex if and only if every nontrivial bounded centerable and M-compact subset $A \subset X$ which contains its Chebyshev center is CCNF.  
\end{theorem}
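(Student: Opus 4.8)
The statement is a characterization, so the plan is to prove the two implications separately. The forward implication---that strict convexity forces every nontrivial bounded centerable M-compact set containing its Chebyshev center to be CCNF---is precisely the content of Theorem~\ref{theorem:strict-M-compact}, so nothing new is required there; I would simply invoke that result. The whole burden of the argument thus lies in the converse.

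For the converse I would argue by contraposition, producing a single set that witnesses the CCF property. Assume $X$ is not strictly convex. By the definition of strict convexity this means $S_X$ contains a nontrivial segment $L = \{(1-t)u + tv \dopu t \in [0,1]\}$ with $u, v \in S_X$, $u \neq v$, and I claim $L$ is the desired witness. The plan is to verify, one by one and via routine one-line computations, that $L$ has all the required features. First, $\diam(L) = \|u-v\|$, since $\|((1-s)u+sv) - ((1-t)u+tv)\| = |s-t|\,\|u-v\|$ is maximized at the endpoints; and the midpoint $w = \frac{u+v}{2}$ satisfies $r(w, L) = \frac12 \|u-v\|$ by the same computation. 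Combined with the universal inequality $r(L) \ge \frac12 \diam(L)$ (which follows from $\|a-b\| \le 2r(x,L)$ for all $x$), this yields $r(L) = \frac12 \|u-v\|$, so $w$ is a Chebyshev center of $L$ and $L$ is centerable; moreover $w \in L$, so $L$ contains its Chebyshev center.

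The crucial point---the only place where the failure of strict convexity is genuinely exploited---is that $L \subset S_X$, so every point of $L$ has norm one. Hence the origin $\theta$ satisfies $\|\theta - p\| = 1$ for every $p \in L$, so $r(\theta, L) = 1$ is attained at every point of $L$, in particular at $w$; thus $w \in F(\theta, L) \subseteq \Far{L}$. Since $w$ is simultaneously a Chebyshev center of $L$ and a member of $\Far{L}$, the segment $L$ is CCF. Finally $L$ is nontrivial (as $u \neq v$), bounded, and M-compact (being a closed segment it is compact, so any sequence---a fortiori any maximizing sequence---has a subsequence converging in $L$). This exhibits a nontrivial bounded centerable M-compact CCF set containing its Chebyshev center, contradicting the hypothesis, and therefore $X$ must be strictly convex. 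I do not anticipate a serious obstacle: the forward direction is a direct citation and the converse is a short explicit construction, the one substantive idea being that placing the witnessing segment on the unit sphere is exactly what lets the origin see the Chebyshev center as a farthest point.
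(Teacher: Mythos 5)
Your proof is correct and follows the paper's own route exactly: the forward direction is a citation of Theorem~\ref{theorem:strict-M-compact}, and the converse uses the same witness the paper has in mind, namely a segment $L$ lying in $S_X$ (whose midpoint is simultaneously a Chebyshev center and a farthest point from the origin), together with the observation that a closed segment is nontrivial, bounded, centerable and M-compact. Your write-up merely makes explicit the routine verifications that the paper compresses into the remark preceding the theorem.
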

\begin{remark}
Theorem \ref{theorem:strict-M-compact} shows that the uniform convexity condition in Theorem \ref{theorem:uniform} can be substituted by strict convexity if we impose an additional condition of M-compactness on the subset $A$ of $X$.
\end{remark}
We would like to add a final comment that Theorem \ref{theorem:Chebyshev-char}, Theorem \ref{theorem:finite-strict} and Theorem \ref{theorem:strictly-convex} together yield a nice step by step characterization of strict convexity of a Banach space. The characterizing properties follow an interesting trend, depending on the dimension of the space. Accordingly, we state the following theorem as the final result of this section:
\begin{theorem}
Let $X$ be a Banach space. Then the following holds. \acr
\emph{(a)} If $X$ is a two-dimensional real Banach space, then $X$ is strictly convex if and only if every nontrivial bounded subset $A$  which contains its Chebyshev center is CCNF. \acr
\emph{(b)} If $X$ is a finite-dimensional Banach space, then $X$ is strictly convex if and only if every nontrivial bounded centerable subset $A$ which contains its Chebyshev center is CCNF. \acr
\emph{(c)} If $X$ is any Banach space, then $X$ is strictly convex if and only if every nontrivial bounded centerable and M-compact subset $A$ which contains its Chebyshev center is CCNF.
\end{theorem}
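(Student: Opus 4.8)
The plan is to recognize that the three items are not new assertions but a dimension-graded repackaging of characterizations already in hand, and to assemble them by splitting each ``if and only if'' into a necessity direction (strict convexity forces the CCNF property) and a sufficiency direction (the CCNF property forces strict convexity). For necessity I would invoke, in each case, the corresponding forward theorem: part (a) follows from Theorem~\ref{theorem:Chebyshev}, whose proof rests on the two-dimensional geometric Lemma~\ref{lemma:2-dimension}; part (b) follows from Theorem~\ref{theorem:uniform} together with the standard fact that in finite dimensions strict convexity coincides with uniform convexity; and part (c) follows from Theorem~\ref{theorem:strict-M-compact}.

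For the sufficiency direction I would argue by contraposition uniformly across all three parts. Suppose $X$ is not strictly convex; then $S_X$ contains a nontrivial closed segment $L = \{(1-t)u + tv : t \in [0,1]\}$ with $u,v \in S_X$, $u \neq v$. As already noted in the introduction and just before Theorem~\ref{theorem:Chebyshev-char}, the midpoint $\frac{u+v}{2}$ is a Chebyshev center of $L$ and simultaneously a farthest point of $L$ from the origin, so $L$ is a CCF set. This single witness $L$ refutes the right-hand side of each of (a), (b), (c), \emph{provided} $L$ satisfies the extra structural hypotheses imposed in each item.

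Verifying those extra hypotheses is the only genuine bookkeeping and is essentially the whole substance of this packaging result. For (a) nothing beyond ``bounded and containing its Chebyshev center'' is required, so $L$ works immediately. For (b) I would record that every line segment is centerable: its diameter $\|u-v\|$ is realized by the endpoints while the midpoint is equidistant from them at distance $\frac{1}{2}\|u-v\|$, so the Chebyshev radius equals half the diameter. For (c) I would additionally observe that a closed bounded segment in a Banach space is compact, hence trivially M-compact, every maximizing sequence having a convergent subsequence with limit in $L$. With these checks, $L$ serves as the required CCF set in each part, which completes the contrapositive.

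I do not anticipate any real obstacle: all the analytic and geometric content has been front-loaded into the earlier theorems and into the segment example, so the present statement is purely an assembly. The only care needed is consistency of hypotheses---confirming that the segment $L$ genuinely meets centerability in (b) and centerability together with M-compactness in (c)---which is routine. Equivalently, one may simply note that parts (a), (b), (c) are verbatim Theorem~\ref{theorem:Chebyshev-char}, Theorem~\ref{theorem:finite-strict}, and Theorem~\ref{theorem:strictly-convex} respectively, and cite them directly.
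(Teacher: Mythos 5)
Your proposal is correct and follows exactly the paper's route: the paper states this theorem as a direct assembly of Theorem~\ref{theorem:Chebyshev-char}, Theorem~\ref{theorem:finite-strict}, and Theorem~\ref{theorem:strictly-convex}, whose converse directions rest on the same segment witness $L$ (noted in the paper to be centerable and M-compact) that you verify. Your bookkeeping of the segment's centerability and M-compactness matches the paper's brief remarks preceding those theorems, so there is nothing to add.
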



\section{Chebyshev centers in $L_p$ spaces}

In this section we demonstrate that all Banach spaces $L_p$, $p \neq 2$, of dimension greater than two are CCF. Since $L_1$ and $L_\infty$ are not strictly convex, for them this result follows from the previous discussion. So, in this section we consider only $1 < p < \infty$. We begin with an elementary technical proposition in dimension three. Let $\ell_p^{(3)}$ denote the space $\R^3$ equipped with the norm $\|(x_1,x_2, x_3)\| = (|x_1|^p + |x_2|^p + |x_3|^p)^{1/p}$, and let $e_1= (1,0,0)$, $e_2= (0,1,0)$, and $e_3= (0,0,1)$.

\begin{prop} \label{prop:ellp3-1}
The Chebyshev center of the set $A_0 = \{e_1, e_2, e_3\} \subset \ell_p^{(3)}$ is the point $x_p = (s_p, s_p, s_p)$, where 
$$
s_p = \frac{1}{1 + 2^{1/(p-1)}}.
$$
\end{prop}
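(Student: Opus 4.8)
The plan is to find the Chebyshev center of $A_0 = \{e_1, e_2, e_3\}$ in $\ell_p^{(3)}$ by exploiting the symmetry of the configuration and then solving a one-variable optimization. Since the three points $e_1, e_2, e_3$ are permuted among themselves by every coordinate permutation, and $\ell_p^{(3)}$ is finite-dimensional, strictly convex (for $1 < p < \infty$), and hence has a unique Chebyshev center, that center must be invariant under all coordinate permutations. This forces the center to lie on the diagonal, i.e. to have the form $x = (s, s, s)$ for some $s \in \R$. This symmetrization argument is exactly the one already used in Example~\ref{example:finite-dim}, so I would invoke it directly.

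Once the center is known to be of the form $(s,s,s)$, the outer radius becomes a function of the single variable $s$. By symmetry, all three distances $\|(s,s,s) - e_i\|$ are equal, namely
\[
\|(s,s,s) - e_1\|^p = |s-1|^p + 2|s|^p.
\]
So $r((s,s,s), A_0) = \bigl(|s-1|^p + 2|s|^p\bigr)^{1/p}$, and I must minimize this over $s \in \R$. It is geometrically clear that the minimizer satisfies $0 < s < 1$ (moving $s$ outside $[0,1]$ only increases both $|s-1|$ and $|s|$ in the relevant ranges, so the optimum is interior), which lets me drop the absolute values and minimize the smooth function $g(s) = (1-s)^p + 2 s^p$.

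The final step is the calculus: setting $g'(s) = -p(1-s)^{p-1} + 2p\, s^{p-1} = 0$ gives $(1-s)^{p-1} = 2 s^{p-1}$, hence $\frac{1-s}{s} = 2^{1/(p-1)}$, which solves to
\[
s = \frac{1}{1 + 2^{1/(p-1)}} = s_p.
\]
Since $g$ is strictly convex on $(0,1)$ for $1 < p < \infty$, this critical point is the unique global minimizer, confirming that $x_p = (s_p, s_p, s_p)$ is the Chebyshev center. I do not anticipate a serious obstacle here; the only point requiring a word of care is the justification that the center lies on the diagonal, and that is handled cleanly by the uniqueness-plus-symmetry argument borrowed from Example~\ref{example:finite-dim}. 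The remaining minimization is elementary and strictly convex, so the critical point is automatically the global minimum.
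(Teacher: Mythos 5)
Your proposal is correct and follows essentially the same route as the paper: both reduce to the diagonal $(s,s,s)$ via uniqueness of the Chebyshev center (uniform convexity of $\ell_p^{(3)}$) plus permutation symmetry, and then minimize $|1-s|^p + 2|s|^p$ over $s$, locating the unique interior critical point at $s_p$. Your added remarks on strict convexity of the one-variable function and the interiority of the minimizer only make explicit what the paper leaves implicit.
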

\begin{proof}
Since $\ell_p^{(3)}$ is uniformly convex, $A_0$ possesses unique Chebyshev center and by symmetry, this Chebyshev center must be of the form $(s, s, s)$.  What remains to do, is to minimize the quantity
$$
f(s) = \|e_k - (s, s, s)\|^p = |1 - s|^{p} + 2 |s|^p, s \in \R.
$$
Evidently, the minimum attains on $(0, 1)$ (otherwise $f(s) \ge 1$), where $f'(s) = 2ps^{p-1} - p(1-s)^{p-1}$, and $s_p$ is the unique root of equation $f'(s) = 0$.
\end{proof}

Following the notation of the previous proposition, denote $A_p = \{e_1, e_2, e_3, x_p\} \subset \ell_p^{(3)}$. 
\begin{prop} \label{prop:ellp3-2}
For $p \in (1,2) \cup (2, \infty),$ $A_p$ is a CCF set and consequently, $\ell_p^{(3)}$ is a CCF space.
\end{prop}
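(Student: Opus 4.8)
The plan is to show that the point $x_p$, which by Proposition~\ref{prop:ellp3-1} is the Chebyshev center of $A_0 = \{e_1,e_2,e_3\}$, remains the Chebyshev center of the enlarged set $A_p = A_0 \cup \{x_p\}$ and is simultaneously a farthest point of $A_p$ from a suitable point. Together these two facts make $A_p$ a CCF set, and hence $\ell_p^{(3)}$ a CCF space.

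First I would identify the Chebyshev center of $A_p$. For every $x \in \ell_p^{(3)}$ one has $r(x, A_p) = \max\{r(x, A_0),\, \|x - x_p\|\}$. Evaluating at $x = x_p$ kills the second term, so $r(x_p, A_p) = r(x_p, A_0) = r(A_0)$, whereas for every $x$ the bound $r(x, A_p) \ge r(x, A_0) \ge r(A_0)$ holds. Thus $x_p$ is a Chebyshev center of $A_p$ with $r(A_p) = r(A_0)$, and since $\ell_p^{(3)}$ is uniformly convex, it is the unique one.

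Next I would produce a point from which $x_p$ is farthest. Exploiting the invariance of $A_p$ under coordinate permutations, I would search among the diagonal points $y_t = (t,t,t)$, $t \in \R$. Since $\|y_t - e_1\| = \|y_t - e_2\| = \|y_t - e_3\|$ and $x_p \in A_p$, it suffices to find one $t$ with $\|y_t - x_p\| \ge \|y_t - e_1\|$, i.e., with
$$
g(t) := 3\,|t - s_p|^p - |t - 1|^p - 2\,|t|^p \ge 0 .
$$
The main computation is the asymptotic analysis of $g$ for large $|t|$: factoring out the leading power of $|t|$ and expanding to first order, the coefficient of $|t|^p$ cancels (since $3 - 1 - 2 = 0$), and the surviving term equals $p(1 - 3 s_p)\,t^{p-1}$ as $t \to +\infty$ and $p(3 s_p - 1)\,|t|^{p-1}$ as $t \to -\infty$.

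Everything then reduces to the sign of $1 - 3 s_p$, which I would read off from $s_p = \frac{1}{1 + 2^{1/(p-1)}}$: for $p \in (1,2)$ one has $2^{1/(p-1)} > 2$, so $s_p < \tfrac13$ and $1 - 3 s_p > 0$, whence $g(t) \to +\infty$ as $t \to +\infty$; for $p \in (2, \infty)$ one has $s_p > \tfrac13$, so $3 s_p - 1 > 0$ and $g(t) \to +\infty$ as $t \to -\infty$. In either case $g(t) > 0$ for a suitable $t$, so $x_p$ is the farthest point of $A_p$ from $y_t$, which finishes the proof. The delicate point — and the reason the two ranges of $p$ force opposite directions along the diagonal — is precisely that the leading $|t|^p$ terms cancel, leaving a subleading term whose sign flips exactly at $p = 2$; this is also consistent with the fact, recalled in the introduction, that the Hilbert space $\ell_2^{(3)}$ is CCNF.
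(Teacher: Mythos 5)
Your proposal is correct and follows essentially the same route as the paper: the paper also takes diagonal points $(t,t,t)$ (for $p\in(1,2)$) and $(-t,-t,-t)$ (for $p>2$), reduces the required inequality to a first-order expansion in $\tau=1/t$ where the leading terms cancel, and concludes from the sign of $1-3s_p$, which flips at $p=2$. Your only (cosmetic) variations are packaging both cases into one function $g(t)$ with $t\to\pm\infty$ and spelling out why $x_p$ remains the Chebyshev center of $A_p$, which the paper states in one line.
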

\begin{proof}
$A_p$ is formed by $A_0$ together with its Chebyshev center $x_p$, so  $x_p$ is also the  Chebyshev center of $A_p$. It remains to show that $x_p \in \Far{A_p}$. We consider the following two cases separately:

Case 1: $p \in (1,2)$. In this case 
\begin{equation} \label{eq:sp<13}
0 < s_p  < \frac13.
\end{equation}
We are going to demonstrate that for $t > 1$ large enough, $x_p$ is the farthest point of $A_p$ from $y = (t,t,t)$. The distance from $y$ to any of $e_k$ equals $ ((t-1)^p + 2t^p)^{1/p}$, $\|y - x_p\| =  3^{1/p} (t - s_p)$, so we need to check for large $t$ the inequality
$$
(t-1)^p + 2t^p < 3\left(t - s_p \right)^p.
$$
Dividing by $t^p$ and denoting $\tau = \frac1t,$ we reduce this to 
\begin{equation} \label{eq:tauuup}
(1 - \tau)^p + 2 < 3(1 - s_p\tau)^p
\end{equation}
for small positive $\tau$. At the point $\tau = 0,$ the left-hand side of \eqref{eq:tauuup} equals the right-hand side. So in order to demonstrate \eqref{eq:tauuup} for $\tau$ close to $0,$ it is sufficient to show for  $f_1(\tau) = (1 - \tau)^p + 2$, $f_2(\tau) =  3(1 - s_p\tau)^p,$ the validity of the inequality  $f_1'(0)  < f_2'(0) $. This is the inequality
$$
- p < -3p  s_p,
$$
which follows from  \eqref{eq:sp<13}.

Case 2: $p \in (2, \infty)$. In this case 
\begin{equation} \label{eq:sp>13}
s_p  > \frac13.
\end{equation}
We are going to demonstrate that for $t > 0$  large enough, $x_p$ is the farthest point of $A_p$ from $y = (-t,-t,-t)$. The distance from $y$ to any of $e_k$ equals $ ((t+1)^p + 2t^p)^{1/p}$, $\|y - x_p\| =  3^{1/p} (t + s_p)$, so we need to check for large $t$ the inequality
$$
(t+1)^p + 2t^p < 3\left(t + s_p \right)^p.
$$
The same way as above, this reduces to 
\begin{equation*} \label{eq:tauuupcase2}
(1 + \tau)^p + 2 < 3(1 + s_p\tau)^p
\end{equation*}
for small positive $\tau$. Denoting $g_1(\tau) = (1 + \tau)^p + 2$, $g_2(\tau) =  3(1 + s_p\tau)^p$, we have to demonstrate the inequality  $g_1'(0)  < g_2'(0)$, i.e., the inequality
$$
p < 3p  s_p,
$$
which follows from  \eqref{eq:sp>13}.
\end{proof}

\begin{theorem}\label{theorem:L_P}
Let $(\Omega, \Sigma, \mu)$ be a finite or $\sigma$-finite measure space, containing a disjoint triple $\{\Delta_i\}_{i = 1}^3 \subset \Sigma$ of subsets of finite positive measure. Then $L_p = L_p (\Omega, \Sigma, \mu)$ is a CCF space for every  $p \in (1,2) \cup (2, \infty)$.
\end{theorem}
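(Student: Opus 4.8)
The plan is to reduce the infinite-dimensional (or higher-dimensional) statement to the three-dimensional case already settled in Proposition~\ref{prop:ellp3-2}. The key structural fact is that $L_p(\Omega,\Sigma,\mu)$ contains an isometric copy of $\ell_p^{(3)}$ spanned by the normalized indicators of the three disjoint sets of finite positive measure, and that this copy is the range of a norm-one projection. First I would fix the disjoint triple $\{\Delta_i\}_{i=1}^3$ and set $f_i = \mu(\Delta_i)^{-1/p}\,\1_{\Delta_i}$, so that $\|f_i\| = 1$ and, because the $\Delta_i$ are disjoint, the map sending $e_i \mapsto f_i$ extends to a linear isometry $J$ from $\ell_p^{(3)}$ onto the subspace $Y = \operatorname{span}\{f_1,f_2,f_3\}$ of $L_p$. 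The transported set $A = J(A_p) \subset Y \subset L_p$ is then the candidate CCF set, and $c = J(x_p)$ its candidate Chebyshev center.

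The heart of the argument is to verify that the Chebyshev center and the farthest-point witness survive the embedding, and this is where the norm-one projection is essential. Let $P\dopu L_p \to Y$ be the conditional-expectation-type projection that averages a function over each $\Delta_i$ and sends it to $Y$; on $L_p$ this $P$ has norm one. I would first argue that $c = J(x_p)$ is the Chebyshev center of $A$ in all of $L_p$: for any $x \in L_p$ one has $\|x - J(e_i)\| \ge \|Px - J(e_i)\|$ since $J(e_i) = f_i \in Y = PY$ and $\|P\|=1$, so $r(x,A) \ge r(Px,A)$; thus the outer radius is minimized within $Y$, where by the isometry $J$ and Proposition~\ref{prop:ellp3-1} the minimizer is exactly $J(x_p)$. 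By uniform convexity of $L_p$ the Chebyshev center is unique, so it equals $c$. The same projection/isometry bookkeeping shows $c \in \Far{A}$: taking $y$ to be $J$ of the farthest-point witness $(t,t,t)$ or $(-t,-t,-t)$ from Proposition~\ref{prop:ellp3-2}, the three vertices $f_i$ and the point $c$ all lie in $Y$, on which $J$ is an isometry, so the inequality $\|y - a\| < \|y - c\|$ for $a \in \{f_1,f_2,f_3\}$ transfers verbatim from $\ell_p^{(3)}$.

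I expect the main obstacle to be the clean justification that the Chebyshev center of $A$ computed in the \emph{ambient} space $L_p$ coincides with the one computed inside $Y$, rather than merely that $c$ is a Chebyshev center relative to $Y$. This is precisely what the norm-one projection $P$ buys us, via the inequality $r(x,A) \ge r(Px,A)$ together with the uniqueness of Chebyshev centers guaranteed by the uniform convexity of $L_p$ for $1<p<\infty$; one must check that $P$ indeed has norm one and fixes $Y$ pointwise, which for the averaging (conditional expectation) projection onto functions constant on each $\Delta_i$ follows from Jensen's inequality applied to the convex function $t \mapsto |t|^p$. Once this is in place, the farthest-point property is the easy part, since it only involves distances between points already lying in the isometric copy $Y$, and the whole statement follows from Proposition~\ref{prop:ellp3-2} for each admissible $p$.
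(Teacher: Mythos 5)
Your proposal is correct and follows essentially the same route as the paper: the same isometric copy of $\ell_p^{(3)}$ spanned by the normalized indicators $\1_{\Delta_i}/\|\1_{\Delta_i}\|$, the same norm-one averaging projection $P$ onto that subspace to ensure the Chebyshev center of $J(A_p)$ computed in all of $L_p$ is $J(x_p)$, and the same transfer of the farthest-point witness via the isometry, reducing everything to Proposition~\ref{prop:ellp3-2}. The only difference is cosmetic: you spell out the inequality $r(x,A)\ge r(Px,A)$ that the paper leaves implicit in the phrase ``1-complemented,'' and you invoke Jensen rather than H\"older for $\|P\|=1$; both are fine.
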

\begin{proof}
Denote $f_i = \1_{\Delta_i}/\|\1_{\Delta_i}\|$, $i = 1,2,3$, $E = \lin\{f_i\}_{i = 1}^3 \subset L_p$. It is well-known (and can be checked easily) that $E$ is isometric to $\ell_p^{(3)}$, where the corresponding isometry $T: \ell_p^{(3)} \to E$ acts as follows: 
$$
T(x_1,x_2,x_3) = x_1f_1 + x_2f_2 + x_3f_3.
$$
It is also well-known that $E$ is 1-complemented in $L_p$ with the corresponding projection  $P: L_p  \to E$ being
$$
Pf = \sum_{i=1}^3 \frac{\int_{\Delta_i} f d\mu}{\mu(\Delta_i)}  \1_{\Delta_i}.
$$
(Equality $\|P\| = 1$ follows from H\"older's inequality). 

  Let $A_p$ be the set from Proposition \ref{prop:ellp3-2}.  If we consider  $T(A_p)$ as a subset of $E$, then $Tx_p$ is its Chebyshev center, because $T$ is an isometry. Since $E$ is 1-complemented in $L_p$, $Tx_p$ is also the Chebyshev center of $T(A_p)$, when  $T(A_p)$  is considered as a subset of $L_p$. Let  $y \in \ell_p^{(3)}$ be such a point that $x_p \in F(y,A_p)$. Since $T$ is an isometry,  $Tx_p$ is the farthest point in  $T(A_p)$ from $Ty$. This means that the Chebyshev center $Tx_p$ of $T(A_p) \subset L_p$ is a  farthest point.
\end{proof}

\end{document}